\def\lijntje{\vrule height2.4pt depth-2pt width0.5in}
\def\vlijntje{\vrule height0.45in depth0.4pt width0.4pt}
\def\vlijn{\buildrel {\hbox to 0pt{\hss$\textstyle\circ$\hss}}\over\vlijntje}
\def\dlijntje{{\vrule height2pt depth-1.6pt
width0.5in}\llap{\vrule height4pt depth-3.6pt width0.5in}}
\def\tlijntje{{\vrule height1.7pt depth-1.3pt
width0.5in}\llap{\vrule height3.0pt depth-2.6pt width0.5in}\llap{\vrule height4.3pt depth-3.9pt width0.5in}
}
\def\vtriple#1\over#2\over#3{\mathrel{\mathop{\kern0pt #2}\limits_{\hbox
to 0pt{\hss$#1$\hss}}^{\hbox to 0pt{\hss$#3$\hss}}}}
\def\rvtriple#1\over#2\over#3{\mathrel{\mathop{\kern0pt #2}\limits_{\hbox
to 0pt{\hss$#3$\hss}}^{\hbox to 0pt{\hss$#1$\hss}}}}
\def\Ct{\vtriple{\scriptstyle 2}\over\circ\over{}
\kern-1pt\lijntje\kern-1pt\vtriple{\scriptstyle 1}\over\circ\over{}
\kern-4pt{\dlijntje \kern -25pt<}\kern8pt
\vtriple{\scriptstyle 0}\over\circ\over{}\kern-1pt
}
\def\Bt{\vtriple{\scriptstyle 2}\over\circ\over{}
\kern-1pt\lijntje\kern-1pt\vtriple{\scriptstyle 1}\over\circ\over{}
\kern-4pt{\dlijntje \kern -25pt>}\kern8pt
\vtriple{\scriptstyle 0}\over\circ\over{}\kern-1pt}
\def\alg{{A}}
\def\ddA{{\rm A}}
\def\ddD{{\rm D}}
\def\Br{{\rm Br}}
\def\BrM{{\rm BrM}}
\def\ADE{{\rm ADE}}
\def\ddB{{\rm B}}
\def\ddD{{\rm D}}
\def\ddE{{\rm E}}
\def\hE{{\hat E}}
\def\Char {{\rm Char}}
\newcommand{\cA}{\mathcal{A}}
\newcommand{\C}{\mathbb C}
\newcommand{\Q}{\mathbb Q}
\newcommand{\R}{\mathbb R}
\newcommand{\Z}{\mathbb Z}
\newcommand{\fB}{\mathfrak{B}}
\def\Dm{\vtriple{\scriptstyle n+1}\over\circ\over{}\kern-1pt\lijntje\kern-1pt
\vtriple{\scriptstyle{n}}\over\circ\over{}
\cdots\cdots\vtriple{\scriptstyle 4}\over\circ\over{}\kern-1pt\lijntje\kern-1pt
\vtriple{\scriptstyle 3}\over\circ\over{\buildrel
{\scriptstyle 2}\over\vlijn}\kern-1pt\lijntje\kern-1pt
\vtriple{1}\over\circ\over{}\kern-1pt}
\def\Dn{\vtriple{\scriptstyle n}\over\circ\over{}\kern-1pt\lijntje\kern-1pt
\vtriple{\scriptstyle{n-1}}\over\circ\over{}
\cdots\cdots\vtriple{\scriptstyle 4}\over\circ\over{}\kern-1pt\lijntje\kern-1pt
\vtriple{\scriptstyle 3}\over\circ\over{\buildrel
{\scriptstyle 2}\over\vlijn}\kern-1pt\lijntje\kern-1pt
\vtriple{1}\over\circ\over{}\kern-1pt}
\def\En{\vtriple{\scriptstyle n}\over\circ\over{}\kern-1pt\lijntje\kern-1pt
\vtriple{\scriptstyle{n-1}}\over\circ\over{}
\cdots\cdots\vtriple{\scriptstyle 5}\over\circ\over{}\kern-1pt\lijntje\kern-1pt
\vtriple{\scriptstyle 4}\over\circ\over{\buildrel
{\scriptstyle 2}\over\vlijn}\kern-1pt\lijntje\kern-1pt
\vtriple{\scriptstyle 3}\over\circ\over{}\kern-1pt\lijntje\kern-1pt
\vtriple{\scriptstyle 1}\over\circ\over{}\kern-1pt}
\def\An{\vtriple{\scriptstyle n}\over\circ\over{}\kern-1pt\lijntje\kern-1pt
\vtriple{\scriptstyle{n-1}}\over\circ\over{}\kern-1pt\lijntje\kern-1pt
\vtriple{\scriptstyle n-2}\over\circ\over{}
\cdots\cdots
\vtriple{\scriptstyle 2}\over\circ\over{}\kern-1pt\lijntje\kern-1pt
\vtriple{\scriptstyle 1}\over\circ\over{}\kern-1pt}
\def\Cn{\vtriple{\scriptstyle n-1}\over\circ\over{}
\kern-1pt\lijntje\kern-1pt\vtriple{\scriptstyle{n-2}}\over\circ\over{}
\cdots\cdots
\vtriple{\scriptstyle 2}\over\circ\over{}
\kern-1pt\lijntje\kern-1pt\vtriple{\scriptstyle 1}\over\circ\over{}
\kern-4pt{\dlijntje \kern -25pt<}\kern10pt
\vtriple{\scriptstyle 0}\over\circ\over{}\kern-1pt}
\def\Ct{\vtriple{\scriptstyle 2}\over\circ\over{}
\kern-1pt\lijntje\kern-1pt\vtriple{\scriptstyle 1}\over\circ\over{}
\kern-4pt{\dlijntje \kern -25pt<}\kern12pt
\vtriple{\scriptstyle 0}\over\circ\over{}\kern-1pt
}
\def\Bn{\vtriple{\scriptstyle n-1}\over\circ\over{}
\kern-1pt\lijntje\kern-1pt\vtriple{\scriptstyle{n-2}}\over\circ\over{}
\cdots\cdots
\vtriple{\scriptstyle 2}\over\circ\over{}
\kern-1pt\lijntje\kern-1pt\vtriple{\scriptstyle 1}\over\circ\over{}
\kern-4pt{\dlijntje \kern -25pt>}\kern10pt
\vtriple{\scriptstyle 0}\over\circ\over{}\kern-1pt}
\def\Bt{\vtriple{\scriptstyle 2}\over\circ\over{}
\kern-1pt\lijntje\kern-1pt\vtriple{\scriptstyle 1}\over\circ\over{}
\kern-4pt{\dlijntje \kern -25pt>}\kern12pt
\vtriple{\scriptstyle 0}\over\circ\over{}\kern-1pt}
\def\Es{\vtriple{\scriptstyle 6}\over\circ\over{}\kern-1pt\lijntje\kern-1pt
\vtriple{\scriptstyle 5}\over\circ\over{}\kern-1pt\lijntje\kern-1pt
\vtriple{\scriptstyle 4}\over\circ\over{\buildrel
{\scriptstyle 2}\over\vlijn}\kern-1pt\lijntje\kern-1pt
\vtriple{3}\over\circ\over{}\kern-1pt\lijntje\kern-1pt
\vtriple{\scriptstyle 1}\over\circ\over{}\kern-1pt}
\def\Ff{
\vtriple{\scriptstyle 1}\over\circ\over{}
\kern-1pt\lijntje\kern-1pt\vtriple{\scriptstyle 2}\over\circ\over{}
\kern-4pt{\dlijntje \kern -25pt<}\kern10pt
\vtriple{\scriptstyle 3}\over\circ\over{}\kern-1pt\lijntje\kern-1pt
\vtriple{\scriptstyle 4}\over\circ\over{}
\kern-1pt}
\def\Ht{
\vtriple{\scriptstyle 1}\over\circ\over{}
\kern-1pt\overset{5}{\lijntje}\kern-1pt\vtriple{\scriptstyle 2}\over\circ\over{}
\kern-1pt\lijntje\kern-1pt
\vtriple{\scriptstyle 3}\over\circ\over{}\kern-1pt}
\def\Hf{
\vtriple{\scriptstyle 1}\over\circ\over{}
\kern-1pt\overset{5}{\lijntje}\kern-1pt\vtriple{\scriptstyle 2}\over\circ\over{}
\kern-1pt\lijntje\kern-1pt
\vtriple{\scriptstyle 3}\over\circ\over{}\kern-1pt\lijntje\kern-1pt
\vtriple{\scriptstyle 4}\over\circ\over{}
\kern-1pt}
\def\In{
\vtriple{\scriptstyle 0}\over\circ\over{}
\kern-1pt\overset{n}{\lijntje}\kern-1pt\vtriple{\scriptstyle 1}\over\circ\over{}
\kern-1pt}
\def\Gt{
\vtriple{\scriptstyle 0}\over\circ\over{}
\kern-4pt{\tlijntje\kern -25pt<}\kern 10pt\vtriple{\scriptstyle 1}\over\circ\over{}
\kern-1pt}
\def\EBn{\vtriple{\scriptstyle n-1}\over\circ\over{}
\kern-1pt\lijntje\kern-1pt\vtriple{\scriptstyle{n-2}}\over\circ\over{\buildrel
{\scriptstyle -1}\over\vlijn}\cdots\cdots
\vtriple{\scriptstyle 2}\over\circ\over{}
\kern-1pt\lijntje\kern-1pt\vtriple{\scriptstyle 1}\over\circ\over{}
\kern-4pt{\dlijntje \kern -25pt<}\kern8pt
\vtriple{\scriptstyle 0}\over\circ\over{}\kern-1pt}
\def\Cn{\vtriple{\scriptstyle n-1}\over\circ\over{}
\kern-1pt\lijntje\kern-1pt\vtriple{\scriptstyle{n-2}}\over\circ\over{}
\cdots\cdots
\vtriple{\scriptstyle 2}\over\circ\over{}
\kern-1pt\lijntje\kern-1pt\vtriple{\scriptstyle 1}\over\circ\over{}
\kern-4pt{\dlijntje \kern -25pt<}\kern10pt
\vtriple{\scriptstyle 0}\over\circ\over{}\kern-1pt}
\def\ECn{\vtriple{\scriptstyle -2}\over\circ\over{}
\kern-4pt{\dlijntje \kern -25pt>}\kern8pt\vtriple{\scriptstyle n-1}\over\circ\over{}
\kern-1pt\lijntje\kern-1pt\vtriple{\scriptstyle{n-2}}\over\circ\over{}
\cdots\cdots
\vtriple{\scriptstyle 2}\over\circ\over{}
\kern-1pt\lijntje\kern-1pt\vtriple{\scriptstyle 1}\over\circ\over{}
\kern-4pt{\dlijntje \kern -25pt<}\kern12pt
\vtriple{\scriptstyle 0}\over\circ\over{}\kern-1pt}
\def\Fo{\vtriple{\scriptstyle -1}\over\circ\over{}
\kern-1pt\lijntje\kern-1pt
\vtriple{\scriptstyle 1}\over\circ\over{}
\kern-1pt\lijntje\kern-1pt\vtriple{\scriptstyle 2}\over\circ\over{}
\kern-4pt{\dlijntje \kern -25pt<}\kern8pt
\vtriple{\scriptstyle 3}\over\circ\over{}\kern-1pt\lijntje\kern-1pt
\vtriple{\scriptstyle 4}\over\circ\over{}
\kern-1pt}
\def\Ft{
\vtriple{\scriptstyle 1}\over\circ\over{}
\kern-1pt\lijntje\kern-1pt\vtriple{\scriptstyle 2}\over\circ\over{}
\kern-4pt{\dlijntje \kern -25pt<}\kern8pt
\vtriple{\scriptstyle 3}\over\circ\over{}\kern-1pt\lijntje\kern-1pt
\vtriple{\scriptstyle 4}\over\circ\over{}
\kern-1pt\lijntje\kern-1pt
\vtriple{\scriptstyle -2}\over\circ\over{}
\kern-1pt}
\def\Go{\vtriple{\scriptstyle -1}\over\circ\over{}
\kern-1pt\lijntje\kern-1pt
\vtriple{\scriptstyle 0}\over\circ\over{}
\kern-4pt{\tlijntje\kern -25pt<}\kern 12pt\vtriple{\scriptstyle 1}\over\circ\over{}
\kern-1pt}
\def\Gf{
\vtriple{\scriptstyle 0}\over\circ\over{}
\kern-4pt{\tlijntje\kern -25pt<}\kern 12pt\vtriple{\scriptstyle 1}\over\circ\over{}
\kern-1pt\lijntje\kern-1pt
\vtriple{\scriptstyle -2}\over\circ\over{}
\kern-1pt}
\numberwithin{equation}{section}
\newtheorem{lemma}{Lemma}[section]
\newtheorem{prop}[lemma]{Proposition}
\newtheorem{thm}[lemma]{Theorem}
\theoremstyle{definition}
\newtheorem{defn}[lemma]{Definition}
\theoremstyle{remark}
\newtheorem{rem}[lemma]{Remark}
\newtheorem{example}[lemma]{Example}
\begin{document}
\title{Morita equivalences  on Brauer algebras  and BMW algebras of simply-laced types}
\author{ Shoumin Liu\footnote{The author is funded by the NSFC (Grant No. 11601275, Youth Program).}}
\date{}
\maketitle

\begin{abstract}
The Morita equivalences of  classical Brauer algebras and classical  Birman-Murakami-Wenzl algebras have been  well studied.
Here we study the Morita equivalence problems on these two kinds of algebras of simply-laced type, especially  for them with the generic parameters.
We  show that Brauer algebras and  Birman-Murakami-Wenzl algebras of simply-laced type are Morita equivalent to
the direct sums of some group algebras of Coxeter groups and some  Hecke algebras of some Coxeter groups, respectively.
\end{abstract}

\section{Introduction}
In \cite{Brauer1937}, when the author study the invariant theory of orthogonal groups, the Brauer algebras are defined
as a class of  diagram algebras, which becomes the most classical examples in Schur-weyl duality. If we regard some horizontal strands
in the diagram algebras as roots of Coxeter groups of type $\ddA$, it is natural to define the Brauer algebras of other types associated to
other Dynkin diagrams. Cohen, Frenks, and  Wales define the Brauer algebras of simply-laced types in \cite{CFW2008}, and describe
some properties of these algebras. The Birman-Murakami-Wenzl algebras ($BMW$ in short) which is defined in \cite{BW1989} and \cite{M1987},
can be considered as a quantum version of
classical Brauer algebras. Analogously, the $BMW$  algebras can be extended to other simply-laced types  in \cite{CGW2008}. \\
The Morita equivalences (\cite{M1958})  and Quasi-heredity (\cite{CPS1988})  are important properties of associative algebras.
As  cellular algebras(\cite{Gra}, \cite{GL1996}, \cite{KX1999}),
these properties of classical Brauer algebras and $BMW$ algebras, even some related algebras,  are well studied  in many papers,
such as  K\"onig and Xi (\cite{KX1999}, \cite{KX19992},\cite{KX2001},\cite{X2000}),
Rui and Si(\cite{R2005}, \cite{RS2006}, \cite{RS2009}, \cite{RS2012}, \cite{S2015}). Their results are based on studying the bilinear forms
for  defining their cellular structures.\\
Therefore, it is natural to ask the Morita equivalences and quasi-heredity on the Brauer algebras and $BMW$ algebras of simply-laced types, especial
for type $\ddD_n$ and type $\ddE_n$ ($n=6, 7, 8$). Our paper will focus on these algebras with generic parameters, and is sketched as the following.
In Section \ref{sect:CA}, we first recall two equivalent definitions of cellular algebra from \cite{GL1996} and \cite{KX1999},
and introduce some basic properties of cellular algebras, especially about Morita equivalence.
In section \ref{sect:BA}, we recall the definition of Brauer algebras ($\Br(Q,k)$) of simply-laced types and some results from
\cite{CFW2008}. In section \ref{sect:ME}, we prove the Morita equivalence and quasi-heredity of $\Br(Q,k)$ with  some
conditions on ground field $k$ and generic parameter $\delta$. In section \ref{sect:typeD}, by analyzing the structure of $\Br(\ddD_n,k)$,
we show some results about the semi-simplicity of $\Br(\ddD_n,k)$ with $\delta$ evaluated. In section \ref{sect:BMW}, similar to Section \ref{sect:ME}, we present
 the Morita equivalence and quasi-heredity on $BMW$ algebras of simply-laced types.

\section{Cellular algebra}\label{sect:CA}
We first recall the definition of cellular algebra from \cite{Gra} and \cite{GL1996}.
\begin{defn}\label{GLdefn}
 An associative
algebra $\alg$ over a commutative ring $R$ is cellular if there is a quadruple
$(\Lambda, T, C, *)$ satisfying the following three conditions.

\begin{itemize}
\item[(C1)] $\Lambda$ is a finite partially ordered set.  Associated to each
$\lambda \in \Lambda$, there is a finite set $T(\lambda)$.  Also, $C$ is an
injective map
$$ \coprod_{\lambda\in \Lambda} T(\lambda)\times T(\lambda) \rightarrow \alg$$
whose image is an $R$-basis of $\alg$.

\item[(C2)]
The map $*:\alg\rightarrow \alg$ is an
$R$-linear anti-involution such that
$C(x,y)^*=C(y,x)$ whenever $x,y\in
T(\lambda)$ for some $\lambda\in \Lambda$.

\item[(C3)] If $\lambda \in \Lambda$ and $x,y\in T(\lambda)$, then, for any
element $a\in \alg$,
$$aC(x,y) \equiv \sum_{u\in T(\lambda)} r_a(u,x)C(u,y) \
\ \ {\rm mod} \ \alg_{<\lambda},$$ where $r_a(u,x)\in R$ is independent of $y$
and where $\alg_{<\lambda}$ is the $R$-submodule of $\alg$ spanned by $\{
C(x',y')\mid x',y'\in T(\mu)\mbox{ for } \mu <\lambda\}$.
\end{itemize}
Such a quadruple $(\Lambda, T, C, *)$ is called a {\em cell datum} for
$\alg$.
\end{defn}
There is also an equivalent definition due to K\"onig and Xi in \cite{KX1999}.
\begin{defn}\label{defn:KXcelluar}
 Let $A$ be $R$-algebra. Assume there is an anti-automorphism $i$ on $A$ with $i^2=id$. A two sided ideal $J$ in
$A$ is called cellular if and only if $i(J)=J$ and there exists a left ideal $\Delta\subset J$ such that $\Delta$ has finite rank and
there is an isomorphism of $A$-bimodules $\alpha:J\simeq \Delta\otimes_R i(\Delta)$ making the following diagram commutative:
\begin{center}
\xymatrix{
J\ar[rr]^{\alpha}\ar[d]_{i} &   & \Delta\otimes_R i(\Delta) \ar[d]^{x\otimes y\rightarrow i(y)\otimes i(x)} \\
          J \ar[rr]^{\alpha}          & & \Delta\otimes_R i(\Delta)
          }
\end{center}
The algebra $A$ is called \emph{cellular} if  there is a vector space decomposition $A=J'_1\oplus \cdots\oplus J'_n$ with
$i(J'_j)=J'_j$ for each $j$ and such that setting $J_j=\oplus_{k=1}^jJ'_j$ gives a chain of two sided ideals of
$A$ such that for each $j$ the quotient $J'_j=J_j/J_{j-1}$  is a cellular ideal of $A/J_{j-1}$.
\end{defn}
Also recall definitions of iterated inflations from \cite{KX1999}.
 Given an $R$-algebra $B$, a finitely generated free $R$-module $V$, and a bilinear form
 $\varphi: V\otimes_{R}V\longrightarrow  B$ with values in $B$, we define an  associative algebra (possibly without unit)
 $A(B, V, \varphi)$ as follows: as an $R$-module, $A(B, V, \varphi)$ equals $V\otimes_{R}V\otimes_{R}B$. The multiplication is defined on basis
 element as follows:
 \begin{eqnarray*}
 (a\otimes b \otimes x)(c\otimes d \otimes y):=a\otimes d \otimes x\varphi(b,c)y.
\end{eqnarray*}
Assume that there is an involution $i$ on $B$. Assume, moreover, that $i(\varphi(v,w))=\varphi(w,v)$.
If we can extend this involution $i$ to $A(B, V, \varphi)$ by defining $i(a\otimes b \otimes x)=b\otimes a \otimes i(x)$. Then
We call $A(B, V, \varphi)$ is an \emph{inflation} of $B$ along $V$.
Let $B$ be an inflated algebra (possible without unit) and $C$ be an algebra with unit. We define an algebra structure in such a way  that
$B$ is a two-sided ideal and $A/B=C$. We require that $B$ is an ideal, the multiplication is associative, and
that there exists a unit element of $A$ which maps onto the unit of the quotient $C$. The necessary conditions are outlined in
\cite[Section 3]{KX1999}. Then we call $A$  an inflation of $C$ along $B$, or iterated inflation of $C$ along $B$.
We present  Proposition 3.5 of \cite{KX1999}  below.
\begin{prop} An inflation of a cellular algebra is cellular again. In particular, an iterated inflation of $n$ copies of
$R$ is cellular, with a cell chain of length $n$ as in Definition \ref{defn:KXcelluar}.
\end{prop}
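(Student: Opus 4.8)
The natural strategy is to apply the ideal-theoretic criterion of Definition~\ref{defn:KXcelluar} rather than the cell datum of Definition~\ref{GLdefn}, since an inflation is already presented as an extension of $C$ by the ideal $B$. Writing $A$ for the inflation of the cellular algebra $C$ along the inflated algebra $B=V\otimes_R V\otimes_R R$ (so that $B$ is a two-sided ideal with $A/B\cong C$), I would build a cell chain for $A$ by taking $B$ as the bottom ideal and then lifting the cell chain of $C$ to the quotients above it. Thus the first task is to show that $B$ is a cellular ideal of $A$ in the sense of Definition~\ref{defn:KXcelluar}, and the second is to propagate the cellular chain of $C=A/B$ up to $A$.

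For the bottom layer, the anti-automorphism is the given involution $i$, and $i(B)=B$ is immediate from $i(a\otimes b\otimes x)=b\otimes a\otimes i(x)$, which preserves $V\otimes_R V\otimes_R R$. I would take the left ideal to be a copy of $V$, realised inside $B$ as the set of elements with a fixed second tensor factor; the inflation multiplication $(a\otimes b\otimes x)(c\otimes d\otimes y)=a\otimes d\otimes x\varphi(b,c)y$ only changes the first tensor factor and the $B$-coefficient, so this $\Delta$ is stable under left multiplication and is a left module of finite rank. The map $\alpha\colon B\to \Delta\otimes_R i(\Delta)$ sending $a\otimes b\otimes x$ to $(a\otimes x)\otimes(b\otimes 1)$ is then an isomorphism of $A$-bimodules, and the hypothesis $i(\varphi(v,w))=\varphi(w,v)$ is exactly what makes the square in Definition~\ref{defn:KXcelluar} commute, since under $\alpha$ the involution $i$ becomes the swap $x\otimes y\mapsto i(y)\otimes i(x)$. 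I expect this verification---that $\Delta$ is genuinely a left $A$-module (not merely a $B$-module) and that $\alpha$ is $A$-bilinear---to be the main obstacle, as it is precisely where the conditions on the inflation of $C$ along $B$ from \cite[Section~3]{KX1999} must be invoked.

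Granting that $B$ is a cellular ideal of $A$, I would complete the chain as follows. Since $C$ is cellular, Definition~\ref{defn:KXcelluar} gives a decomposition $C=J'_1\oplus\cdots\oplus J'_m$ with $i(J'_k)=J'_k$ and ideals $J_k=\bigoplus_{l\le k}J'_l$ such that each $J'_k=J_k/J_{k-1}$ is a cellular ideal of $C/J_{k-1}$. Pulling these back along $A\twoheadrightarrow C$, set $I_0=B$ and let $I_k$ be the preimage of $J_k$; each $I_k$ is an $i$-stable two-sided ideal of $A$, and $I_k/I_{k-1}\cong J_k/J_{k-1}$ is a cellular ideal of $A/I_{k-1}\cong C/J_{k-1}$. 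Together with the bottom step $I_0=B$, this exhibits $A$ as a chain of $i$-stable ideals whose successive quotients are cellular ideals, which is precisely the data required by Definition~\ref{defn:KXcelluar}; hence $A$ is cellular.

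The final assertion then follows by induction on $n$. For $n=1$ the algebra $R$ is trivially cellular with a cell chain of length one. For the inductive step, an iterated inflation of $n$ copies of $R$ is an inflation of the cellular algebra $C$ (an iterated inflation of $n-1$ copies, with a cell chain of length $n-1$ by hypothesis) along a single inflated copy $B=V\otimes_R V\otimes_R R$ of $R$. The first part shows this is cellular, and the construction above adjoins exactly one new bottom layer to the length-$(n-1)$ chain coming from $C$, producing a cell chain of length $n$. This completes the plan.
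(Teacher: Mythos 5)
The paper offers no proof of this statement: it is quoted verbatim as Proposition 3.5 of \cite{KX1999}, so the only thing to compare against is the original argument of K\"onig and Xi, and your plan follows essentially that same route --- exhibit the inflated bottom layer as a cellular ideal in the sense of Definition \ref{defn:KXcelluar} with $\Delta$ a ``column'' $V\otimes_R b_0\otimes_R R$, pull the cell chain of $C\cong A/B$ back along $A\twoheadrightarrow C$, and induct for the second assertion; your induction also correctly matches the way iterated inflations are built (each step adjoins one new bottom layer $V\otimes_R V\otimes_R R$ beneath the previously constructed algebra). Two caveats. First, your bottom-layer argument only treats $B=V\otimes_R V\otimes_R R$, whereas the first sentence of the proposition allows a bottom layer $V\otimes_R V\otimes_R B'$ for an arbitrary cellular algebra $B'$; there one must additionally interleave a cell chain of $B'$, taking $\Delta=V\otimes_R\Delta'_l$ for the cell modules $\Delta'_l$ of $B'$ --- the case $B'=R$ is exactly the ``in particular'' clause and is all this paper uses, so this is a restriction rather than an error. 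Second, the hypothesis $i(\varphi(v,w))=\varphi(w,v)$ is what makes $i$ an anti-automorphism of the inflated layer, not what makes the square in Definition \ref{defn:KXcelluar} commute; that commutativity is a formal consequence of the formula $i(a\otimes b\otimes x)=b\otimes a\otimes i(x)$ once $\alpha$ is written down. You are right that the one genuinely non-formal point is that $\Delta$ is a left $A$-module (not merely a left $B$-module) and that $\alpha$ is $A$-bilinear; this is precisely what the compatibility conditions of \cite[Section 3]{KX1999} guarantee (left multiplication by $A$ affects only the first tensor factor and the coefficient), so invoking them there, as you do, closes the argument rather than leaving a gap.
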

More precisely, the second statement has the following meaning. Start with $C$ a full matrix ring over $R$ and  $B$ an inflation of $R$ along a
free $R$-module, and form a new $A$ which is an inflation of the old $A$ along the new $B$, and continue this operation. Then after
$n$ steps we have produced a cellular algebra $A$ with a cell chain of length $n$.

We also have Theorem 4.1 from \cite{KX1999} as follows.
\begin{thm}Any cellular algebra over $R$ is the iterated inflation of finitely many copies of $R$. Conversely, any iterated inflation of
finitely many copies of $R$ is cellular.
\end{thm}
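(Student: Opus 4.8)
The plan is to treat the two directions separately. The converse assertion---that any iterated inflation of finitely many copies of $R$ is cellular---is already contained in the preceding Proposition, which states precisely that an iterated inflation of $n$ copies of $R$ is cellular with a cell chain of length $n$; so nothing new is needed there and I would simply invoke it. The substance of the theorem is the forward direction: starting from a cellular algebra $A$ with cell datum $(\Lambda, T, C, *)$ in the sense of Definition \ref{GLdefn}, I must exhibit $A$ as an iterated inflation of copies of $R$. My strategy is induction on $|\Lambda|$, peeling off at each stage a minimal two-sided ideal attached to a minimal element of the poset, and recognizing that ideal as an inflation of a single copy of $R$.

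For the inductive step, fix a minimal element $\lambda \in \Lambda$ and let $J$ be the $R$-span of $\set{C(x,y) : x,y \in T(\lambda)}$. First I would check that $J$ is a two-sided ideal: condition (C3) gives $aC(x,y) \equiv \sum_u r_a(u,x) C(u,y) \bmod A_{<\lambda}$, and since $\lambda$ is minimal we have $A_{<\lambda}=0$, so left multiplication keeps us inside $J$; applying the anti-involution $*$ (using $J^*=J$, which holds because $C(x,y)^*=C(y,x)$) gives closure under right multiplication. The heart of the argument is the multiplication formula. Writing $C(x,y)C(u,v) = \sum_w r_{C(x,y)}(w,u)\, C(w,v)$ from (C3), then applying $*$ and comparing the outcome with the (C3)-expansion of $C(v,u)C(y,x)$, one finds that only a single term survives and that the surviving scalar $r_{C(x,y)}(x,u)$ is independent of both outer indices $x$ and $v$. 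Setting $\varphi(y,u) := r_{C(x,y)}(x,u)$ I obtain
$$C(x,y)C(u,v) = \varphi(y,u)\, C(x,v),$$
and a second application of $*$ forces $\varphi(y,u)=\varphi(u,y)$. Thus $\varphi$ is a symmetric $R$-bilinear form on the free module $V := R\,T(\lambda)$ with values in $B := R$, and $J \cong A(R,V,\varphi) = V\otimes_R V\otimes_R R$ with exactly the inflation multiplication, the anti-involution $C(x,y)^*=C(y,x)$ matching $i(a\otimes b)=b\otimes a$. Hence $J$ is an inflation of one copy of $R$.

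To finish I would pass to the quotient $\bar A := A/J$, which inherits the cell datum indexed by $\Lambda\setminus\set{\lambda}$ and is therefore cellular with a strictly smaller poset, so the induction hypothesis presents $\bar A$ as an iterated inflation of copies of $R$. It then remains to reassemble: $A$ is the inflation of $\bar A$ along the ideal $J$, and I would verify the requirements recorded in Section 3 of \cite{KX1999}---that $J$ is a two-sided ideal, that the multiplication is associative, and that $A$ carries a unit mapping onto the unit of $\bar A$. Choosing a linear extension $\lambda_1,\dots,\lambda_m$ of the partial order with minimal elements first and setting $J_k$ to be the span of all cells at levels $\leq \lambda_k$ then produces the cell chain of length $m=|\Lambda|$, exhibiting $A$ as an iterated inflation of $m$ copies of $R$.

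The main obstacle is the multiplication formula and, specifically, the claim that $\varphi(y,u)$ is genuinely independent of the surviving outer indices and is symmetric; this is exactly where one must play (C3)---which guarantees independence of one index on each side---against the anti-involution $*$, and any slip in bookkeeping the four indices $x,y,u,v$ collapses the argument. The reassembly step is conceptually routine but technically delicate, since the inflation layers $J_k/J_{k-1}$ are non-unital and one must confirm the compatibility conditions of \cite{KX1999} that glue them into the unital algebra $A$.
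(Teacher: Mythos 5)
The paper does not actually prove this statement: it is imported verbatim as Theorem 4.1 of \cite{KX1999}, with the proof left to that reference. Your argument is correct and is essentially the K\"onig--Xi proof: the converse direction is the preceding Proposition, and the forward direction rests on the multiplication formula $C(x,y)C(u,v)\equiv \varphi(y,u)\,C(x,v) \ \mathrm{mod}\ \alg_{<\lambda}$ with $\varphi$ symmetric, which you derive correctly by comparing the (C3)-expansion with its image under $*$, so that each layer $J_\lambda/J_{<\lambda}$ becomes $V\otimes_R V\otimes_R R$ and a linear extension of $\Lambda$ yields the iterated inflation. The only step you leave implicit --- checking the gluing conditions of Section 3 of \cite{KX1999} --- is genuinely automatic here, since $A$ is already a unital associative algebra and the $J_k$ are honest two-sided ideals.
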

Let $A$ be  cellular(with identity) which can be realized as an iterated inflation of cellular algebras $B_l$ along vector spaces $V_l$
for $l=1, \ldots,n.$ This implies that as a vector space
\begin{eqnarray*}
A=\oplus_{l=1}^{n} V_l\otimes V_l\otimes B_l,
 \end{eqnarray*}
 and $A$ is cellular with a chain of two sided ideals ${0}=J_0\subset J_1\cdots \subset J_n=A$, which can be refined to a cell chain, and each
 quotient $J_l/J_{l-1}$ equals $V_l\otimes V_l\otimes B_l$ as an algebra without unit. The involution $i$ of $A$,is defined through the involution $i_l$ of the algebra $B_l$ where
 $i(a\otimes b\otimes x)=b\otimes a\otimes j_l(x)$. The multiplication rule of a layer $V_l\oplus V_l\oplus B_l$ is indicated by
 \begin{eqnarray*}
 (a\otimes b \otimes x)(c\otimes d \otimes y):=a\otimes d \otimes x\varphi(b,c)y+\text{lower terms}.
\end{eqnarray*}
 Here lower terms refers to element in lower layers $V_h\otimes V_h\otimes B_h$ for $h<l$.
 Let $1_{B_l}$ be the identity of the algebra $B_l$.\\
 We recall \cite[Theorem 2.6]{S2015} about the Morita equivalence of celluar algebra.
 \begin{thm}\label{thm:si}
  Let $R$ be a field. Suppose that $A$ is an iterated inflation of $R$-algebras $B_1$, $B_2$, $\cdots$, $B_n$, where each inflation
 is along $R$-vector space $V_i$, $1\leq i\leq n$. For each $i$, let $\varphi_i: V_i\otimes V_i\rightarrow B_i$ be the bilinear form with respect to
 each inflation. If $\varphi_i$ is non-singular for all $i$, then $$A\overset{Morita}{\sim}\oplus_{i=1}^{n}B_i.$$
 \end{thm}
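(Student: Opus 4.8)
The plan is to induct on the number of layers $n$, peeling off the bottom ideal $J_1$ first, since it is the only layer of the cell chain whose multiplication carries no ``lower term'' corrections. Two ingredients drive the argument: first, that a single inflation layer with non-singular form is nothing but a full matrix ring over the corresponding $B_i$, hence Morita equivalent to $B_i$; and second, that the bottom layer, being both a two-sided ideal and (by the first point) a unital ring, is forced to be a direct factor of $A$ through a central idempotent. I would then feed the complementary factor back into the induction.

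First I would analyze one layer in isolation. Fix a basis $\{v_1,\dots,v_{d_i}\}$ of $V_i$ and let $G_i=(\varphi_i(v_a,v_b))_{a,b}$ be the Gram matrix of $\varphi_i$, an element of $M_{d_i}(B_i)$; non-singularity of $\varphi_i$ means precisely that $G_i$ is invertible over $B_i$. Identifying the basis element $v_a\otimes v_b\otimes x$ of $L_i:=V_i\otimes V_i\otimes B_i$ with the matrix $xE_{ab}$, the inflation product of $xE_{ab}$ and $yE_{cd}$ equals $x(G_i)_{bc}y\,E_{ad}$, i.e.\ it coincides with the $G_i$-twisted matrix product $P,Q\mapsto PG_iQ$. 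The map $\theta_i\colon P\mapsto PG_i$ is then an algebra isomorphism from $L_i$ onto $M_{d_i}(B_i)$, since $\theta_i(P)\theta_i(Q)=PG_iQG_i=\theta_i(PG_iQ)$. In particular $L_i$ is unital, with unit $\theta_i^{-1}(I)=\sum_{a,b}(G_i^{-1})_{ab}\,v_a\otimes v_b\otimes 1_{B_i}$, and being a full matrix ring over $B_i$ it satisfies $L_i\overset{Morita}{\sim}B_i$.

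Next comes the key structural step. Because $J_1=L_1$ sits at the bottom of the chain $0=J_0\subset J_1\subset\cdots\subset J_n=A$, the lower terms in the multiplication rule vanish, so the product on $J_1$ inherited from $A$ is exactly the inflation product; by the previous paragraph $J_1\cong M_{d_1}(B_1)$ is unital. Let $e_1\in J_1$ be its unit. Since $J_1$ is a two-sided ideal possessing a unit element, $e_1$ is automatically central in $A$: for any $a\in A$ both $ae_1$ and $e_1a$ lie in $J_1$, whence $e_1(ae_1)=ae_1$ and $(e_1a)e_1=e_1a$, and therefore $ae_1=e_1ae_1=e_1a$. Thus $e_1$ is a central idempotent and $A\cong e_1Ae_1\times(1-e_1)A(1-e_1)\cong J_1\times(A/J_1)$ as a direct product of algebras. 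The complementary factor $A/J_1$, equipped with the induced chain $0\subset J_2/J_1\subset\cdots\subset J_n/J_1$, is again an iterated inflation, now of $B_2,\dots,B_n$ along $V_2,\dots,V_n$ with the same forms $\varphi_2,\dots,\varphi_n$, all still non-singular, so the inductive hypothesis gives $A/J_1\overset{Morita}{\sim}\bigoplus_{i=2}^{n}B_i$. As Morita equivalence is compatible with finite direct products of algebras and $J_1\overset{Morita}{\sim}B_1$, the splitting yields $A\overset{Morita}{\sim}B_1\oplus\bigoplus_{i=2}^{n}B_i=\bigoplus_{i=1}^{n}B_i$; the base case $n=1$ is exactly $A=L_1\cong M_{d_1}(B_1)\overset{Morita}{\sim}B_1$.

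The step I expect to require the most care is pinning down the correct reading of ``non-singular'' — namely invertibility of the Gram matrix $G_i$ over the (possibly noncommutative) ring $B_i$ rather than mere non-degeneracy over the field $R$ — because this is exactly what is needed both for the isomorphism $L_i\cong M_{d_i}(B_i)$ and for the unitality of the bottom ideal that powers the whole induction. Verifying that the induced data on $A/J_1$ really do form an iterated inflation of the truncated layers, so that the induction closes, is routine but should be checked directly against the stated multiplication rule.
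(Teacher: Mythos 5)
Your argument is correct. Note that the paper does not actually prove this statement: it is quoted verbatim from Si \cite[Theorem 2.6]{S2015}, so there is no in-paper proof to compare against. Your route is the natural (and, as far as I can tell, the standard) one: identify a single layer with non-singular Gram matrix $G_i$ with the full matrix ring $M_{d_i}(B_i)$ via $P\mapsto PG_i$, observe that the bottom cell ideal $J_1$ carries the untwisted inflation product and is therefore unital, deduce that its unit is a central idempotent splitting $A\cong J_1\times A/J_1$, and induct. All the individual steps check out: the identification of the inflation product with the $G_i$-twisted matrix product is right, the ``ideal with its own identity is a direct factor'' argument is standard and correctly executed, and $A/J_1$ visibly inherits the structure of an iterated inflation of the remaining layers with the same forms. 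You are also right to flag that ``non-singular'' must mean invertibility of $G_i$ in $M_{d_i}(B_i)$ (not mere non-degeneracy over $R$); that is exactly the hypothesis needed, and it is what is satisfied in the paper's applications (Theorems \ref{thm:morita} and the BMW analogue), where semisimplicity forces the forms to be non-singular in this strong sense. The only cosmetic caveat is the implicit assumption that each $V_i\neq 0$ and each $B_i$ is unital, which is part of the setup.
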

 In this paper, we will focus on the quasi-heredity on some algebras, then  we recall the definition of quasi-heredity algebra from \cite{CPS1988}.
 \begin{defn} Let $k$ be any  associative ring, and  $A$ be a $k$-algebra. An ideal $J$ in $A$ is called a hereditary  ideal if $J$ is idempotent, $J(rad(A))J=0$. and J is a projective left(or, right) $A$-module; the algebra $A$ is called a heredity algebra.
 The algebra $A$ is called quasi-hereditary provided there is a finite chain $0=J_0\subset J_1\subset\dots \subset J_n=A$ of ideals in $A$ such that
 $J_j/J_{j-1}$ is a hereditary ideal in $A/J_{j-1}$ for all $j$. Such a chain is then called a heredity ideal of the quasi-hereditary algebra $A$.
\end{defn}
\section{Brauer algebras of simply-laced type}\label{sect:BA}
We recall the definition of simply-laced Brauer algebra from \cite{CFW2008}.

\begin{defn}\label{1.1}
Let $Q$ be a graph. The Brauer monoid $\BrM(Q)$ is the monoid
generated by the symbols $R_i$ and $E_i$, for  each node $i$ of $Q$ and $\delta$,
$\delta^{-1}$ subject to the following relation, where
$\sim$ denotes adjacency between nodes of $Q$.

\begin{equation}\delta\delta^{-1}=1     \label{1.1.1}
\end{equation}
\begin{equation}R_{i}^{2}=1          \label{1.1.2}
\end{equation}
\begin{equation}R_iE_i=E_iR_i=E_i     \label{1.1.3}
\end{equation}
\begin{equation}E_{i}^{2}=\delta E_{i}   \label{1.1.4}
\end{equation}
\begin{equation}R_iR_j=R_jR_i, \,\, \mbox{for}\, \it{i\nsim j} \label{1.1.5}
\end{equation}
\begin{equation}E_iR_j=R_jE_i,\,\, \mbox{for}\, \it{i\nsim j}  \label{1.1.6}
\end{equation}
\begin{equation}E_iE_j=E_jE_i,\,\, \mbox{for}\, \it{i\nsim j}    \label{1.1.7}
\end{equation}
\begin{equation}R_iR_jR_i=R_jR_iR_j, \,\, \mbox{for}\, \it{i\sim j}  \label{1.1.8}
\end{equation}
\begin{equation}R_jR_iE_j=E_iE_j ,\,\, \mbox{for}\, \it{i\sim j}       \label{1.1.9}
\end{equation}
\begin{equation}R_iE_jR_i=R_jE_iR_j ,\,\, \mbox{for}\, \it{i\sim j}     \label{1.1.10}
\end{equation}
The Brauer algebra $\Br(Q)$ is  the the free $\Z$-algebra for Brauer monoid  $\BrM(Q)$.
 \end{defn}
 We denote $\Br(Q,k)=\Br(Q)\otimes_{\Z} k$, where $k$ is an arbitrary ring.
 The Brauer algebras $\Br(Q)$ has been well studied in \cite{CFW2008}, where the basis and ranks
 of finite types are given. Usually we call $R_i$s Coxeter generators, and  $E_i$s  Temperley-Lieb generators(\cite{TL1971}).
 \begin{table}[!htb]
\caption{Coxeter diagrams of spherical types}\label{DKdiagram}
\begin{center}
\begin{tabular}{c|c}
type&diagram\\
\hline
$\ddA_n$&$\An$\\
$\ddD_n$&$\Dn$\\
$\ddE_n$, $6\le n\le 8$&$\En$
\end{tabular}
\end{center}
\end{table}

Let $Q$ be a spherical Coxeter diagram of simply laced type, i.e., its connected components are of type
$\ddA$, $\ddD$, $\ddE$ as listed in Table \ref{DKdiagram}. This section is to summarize some results in \cite{CGW2006}.

When $Q$ is  $\ddA_n$, $\ddD_n$, $\ddE_6$, $\ddE_7$, or $\ddE_8$, we denote it as $Q\in{\rm ADE}$.
Let $(W, T)$ be the Coxeter system of type $Q$ with $T=\{R_1,\ldots,R_n\}$ associated to the diagram of
$Q$ in Table \ref{DKdiagram}.
Let $\Phi$ be the root system of type $Q$,  let $\Phi^+$ be its positive root system, and let $\alpha_i$ be the simple root
associated to the node $i$ of $Q$. We are interested in sets $B$ of mutually commuting reflections, which has a bijective correspondence with sets of
mutually orthogonal roots of $\Phi^+$, since each reflection in $W$ is uniquely determined by a positive root and vice versa.
\begin{rem}\label{rem:positiveaction}
The action of $w\in W$ on $B$ is given by conjugation in case $B$ is described by reflections and given by
$w\{\beta_1,\ldots, \beta_p\}=\Phi^+\cap \{\pm w\beta_1,\ldots, \pm w\beta_p \}$, in case $B$ is described by positive roots.
For example, $R_4R_1R_2R_1\{\alpha_1+\alpha_2, \alpha_4\}=\{\alpha_1+\alpha_2,\alpha_4\}$, where $Q=\ddA_4$.
\end{rem}
For $\alpha$, $\beta\in \Phi$, we write $\alpha\sim\beta$ to denote $|(\alpha,\beta)|=1$. Thus, for $i$ and $j$ nodes of
$Q$, we have $\alpha_i\sim\alpha_j$ if and only if $i\sim j$.
\begin{defn}
Let $\mathfrak{B}$ be a   $W$-orbit of sets of  mutually orthogonal positive roots. We say that
$\mathfrak{B}$ is an \emph{admissible orbit} if for each $B\in \mathfrak{B}$, and $i$, $j\in Q$ with $i\not\sim j$
 and $\gamma$, $\gamma-\alpha_i+\alpha_j\in B$ we have $r_iB=r_jB$, and each element in $\mathfrak{B}$ is called an admissible root set.
\end{defn}
This is the definition from \cite{CGW2006},  and there is another equivalent definition in \cite{CFW2008}. We also state it here.
\begin{defn}
 Let $B\subset\Phi^+$ be a mutually orthogonal root set. If for all $\gamma_1$, $\gamma_2$, $\gamma_3\in B$
 and $\gamma\in \Phi^+$, with $(\gamma,\gamma_i)=1$, for
 $i=1$, $2$, $3$, we have $2\gamma+\gamma_1+\gamma_2+\gamma_3\in B$, then $B$ is called an admissible root set.
 \end{defn}
 By these two definitions, it follows that the intersection of two admissible root sets are admissible.
 It can be checked by definition that the intersection of two admissible sets are still admissible.  Hence
for a given  set $X$ of mutually orthogonal positive roots, the unique smallest admissible set containing
$X$ is called the admissible closure of $X$, and denoted as $X^{\rm cl}$ (or $\overline{X}$). Up to
the action of the corresponding Weyl groups,
all admissible root sets of type $\ddA_n$, $\ddD_n$,  $\ddE_6$, $\ddE_7$, $\ddE_8$ have appeared in  \cite{CFW2008}, \cite{CGW2009} and \cite{CW2011},
and are listed
in  Table \ref{table:admADE}. In the table, the
set
$Y(t)^*$  consists of all $\alpha^*$ for  $\alpha\in Y(t)$, where $\alpha^*$ is    the unique
positive root orthogonal to $\alpha$ and all other positive roots orthogonal
to $\alpha$ for type $\ddD_n$ with $n> 4$.  For type $\ddD_n$, if we considier the root systems are realized in $\R^{n}$,
with $\alpha_1=\epsilon_2-\epsilon_1$, $\alpha_2=\epsilon_2+\epsilon_1$, $\alpha_i=\epsilon_i-\epsilon_{i-1}$, for $3\le i\le n$,
then $\Phi^+=\{\epsilon_j\pm\epsilon_i\}_{1\le i<j\le n}$, then  $(\epsilon_j\pm\epsilon_i)^*=\epsilon_j\mp\epsilon_i$.  For $\ddD_4$,
the $t$ can be $0$, $1$, $2$, $3$, which means the number of nods in the  coclique.
When $t=2$, although in the Dynkin diagram $\{\alpha_1,\alpha_2\}$ and $\{\alpha_1,\alpha_4\}$ are symmetric,  they are  in the
 different orbits under the  Weyl group's actions.  Then the admissible root sets for $\ddD_4$ can be written as the $W(\ddD_4)$'s
 orbits of $\emptyset$, $\{\alpha_3\}$,  $\{\alpha_1,\alpha_2\}$,  $\{\alpha_1,\alpha_4\}$, and $\{\alpha_1,\alpha_2,\alpha_4,\alpha_1+\alpha_2+\alpha_4+2\alpha_3\}.$
\begin{table}
\caption{Admissible root sets of simply laced type}
\label{table:admADE}
 \begin{center}
\begin{tabular}{|c|c|}
\hline
$Q$&representatives of orbits \, under\, $W(Q)$\\
\hline
$\ddA_n$&$\{\alpha_{2i-1}\}_{i=1}^{t}$,\,$0\le t\le \left\lfloor{(n+1)/2}\right\rfloor. $\\
\hline
$\ddD_n$&$Y(t)=\{\alpha_{n+2-2i}, \alpha_{n-2},\ldots, \alpha_{n+2-2t}\}$ \, $0\le t\le \left\lfloor{n/2}\right\rfloor. $\\
        & $\{\alpha_{n+2-2i}, \alpha_{n-2},\ldots, \alpha_{4}, \alpha_1\}$  \text{if}    $2|n$\\
        & $Y(t)\cup Y(t)^*$ \, $0\le t\le \left\lfloor{n/2}\right\rfloor$\\
\hline
$\ddE_6$ & $\emptyset$, $\{\alpha_6\}$, $\{\alpha_6, \alpha_4\}$,  $\{\alpha_6, \alpha_2, \alpha_3\}^{\rm cl}$ \\
\hline
$\ddE_7$ & $\emptyset$, $\{\alpha_7\}$, $\{\alpha_7, \alpha_5\}$,  $\{\alpha_5, \alpha_5, \alpha_2\}$, $\{\alpha_7, \alpha_2, \alpha_3\}^{\rm cl}$,
                               $\{\alpha_7, \alpha_5, \alpha_2, \alpha_3\}^{\rm cl}$   \\
\hline
$\ddE_8$ & $\emptyset$, $\{\alpha_8\}$, $\{\alpha_8, \alpha_6\}$,   $\{\alpha_8, \alpha_2, \alpha_3\}^{\rm cl}$,
                               $\{\alpha_8, \alpha_5, \alpha_2, \alpha_3\}^{\rm cl}$   \\
\hline
\end{tabular}
\end{center}
\end{table}

\begin{example} If $Q=\ddD_4$, the root set $\{\alpha_1, \alpha_2, \alpha_4\}$ is mutually orthogonal but not admissible,
and its admissible closure is $\{\alpha_1, \alpha_2, \alpha_4, \alpha_1+\alpha_2+2\alpha_3+\alpha_4\}$.
\end{example}
\begin{defn}
\label{df:cA}
Let $\cA$ denote the collection of all admissible subsets of $\Phi$ consisting of
mutually orthogonal  positive roots.
Members of $\cA$ are called admissible sets.
\end{defn}
Now we consider the actions of  $R_i$ on an admissible $W$-orbit $\fB$. When $R_iB\neq B$, We say that
$R_i$ lowers $B$ if there is a root $\beta\in B$ of minimal height  among those moved by $R_i$ that
satisfies $\beta-\alpha_i\in \Phi^+$ or $R_iB<B$.
We say that $R_i$ raises $B$ if there is a root $\beta\in B$ of minimal height among
those moved by $R_i$ that satisfies $\beta+\alpha_i\in \Phi^+$ or $R_iB>B$. By this
we can set an  partial order on $\fB=WB$. The poset $(\fB, <)$ with this minimal ordering is called the monoidal poset (with respect to $W$) on $\fB$ (so $\fB$ should be admissible for the poset to be monoidal). If $\fB$ just consists of sets
of a single root, the order is determined by the canonical height function on roots. There is an important conclusion in \cite{CGW2006},  stated  below.
This theorem plays a crucial role in obtaining a basis for Brauer algebra of simply laced type in \cite{CFW2008}.
\begin{thm}\label{thm:maximal}
 There is a unique maximal element in $\fB$.
 \end{thm}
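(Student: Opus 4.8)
The plan is to combine a finiteness argument with a confluence (Newman's lemma) argument for the raising relation. Since $Q$ is spherical, $W$ is finite, so $\fB=WB$ is a finite set and the strict order $<$ is well-founded; in particular maximal elements exist and every chain of raisings terminates. It therefore suffices to prove uniqueness, and for this I would regard each covering relation $B \lessdot R_iB$ (``$R_i$ raises $B$'') as a terminating rewriting rule $B \to R_iB$ and aim to show this rewriting system is globally confluent.

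By Newman's lemma, a well-founded rewriting relation is confluent as soon as it is locally confluent, so the real task is the local statement: whenever $R_i$ and $R_j$ both raise $B$ (with $i\neq j$), the sets $R_iB$ and $R_jB$ admit a common upper bound in $(\fB,<)$. I would prove this by splitting on adjacency. If $i\not\sim j$, then $R_iR_j=R_jR_i$ and the two reflections move essentially independent roots of $B$; here I expect $R_iR_jB=R_jR_iB$ to be the required common upper bound, the point being to check that $R_j$ still raises $R_iB$ and $R_i$ still raises $R_jB$. This is exactly where admissibility enters through its first formulation: the condition that $\gamma,\ \gamma-\alpha_i+\alpha_j\in B$ forces $R_iB=R_jB$ is tailored to rule out the degenerate interaction between $R_i$ and $R_j$ for non-adjacent $i,j$. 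If $i\sim j$, then $\langle R_i,R_j\rangle$ is the symmetric group of type $\ddA_2$ and one uses the braid relation $R_iR_jR_i=R_jR_iR_j$; I would track the two length-increasing paths out of $B$ inside this rank-two subpicture and show they reconverge at the top.

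To pass from confluence to a genuinely unique maximal element I would use that $\fB$ is a single $W$-orbit: since $W$ is generated by the $R_i$ and every move $B\mapsto R_iB$ is either a raising or a lowering, any two members of $\fB$ are joined by a zig-zag of covering edges, so $\fB$ is connected under the symmetric relation. Confluence makes ``having a common raising-descendant'' an equivalence relation, adjacent elements are then equivalent, and connectivity collapses everything into a single class whose unique maximal element is the unique normal form. This yields the theorem.

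The main obstacle is the local confluence in the adjacent case $i\sim j$. The difficulty is that each of $R_i,R_j$ may move several mutually orthogonal roots of $B$ simultaneously, and whether a move raises or lowers is decided only by the minimal-height moved root, so a naive total-height argument need not be monotone along the order. I would therefore reduce the analysis to the root subsystems spanned by $\alpha_i,\alpha_j$ together with the (at most three, by orthogonality and positive-definiteness) roots of $B$ adjacent to them, i.e. to subsystems of rank at most three such as $\ddA_2$, $\ddA_3$ and $\ddA_1\times\ddA_1$. In each configuration I would invoke the second, closure-theoretic formulation of admissibility ($2\gamma+\gamma_1+\gamma_2+\gamma_3\in B$) to exclude the mixed raise/lower configurations that would otherwise break confluence. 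Checking that this finite list of local configurations always reconverges is the technical heart of the argument.
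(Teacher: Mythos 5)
The first thing to note is that the paper does not prove this statement at all: it is imported verbatim from \cite{CGW2006} (``There is an important conclusion in \cite{CGW2006}, stated below''), so there is no in-paper argument to compare against and the real benchmark is the cited source. Your Newman's-lemma strategy is a reasonable way to organize such a proof, and the skeleton ``existence of maximal elements from finiteness; uniqueness from local confluence plus connectivity of the single $W$-orbit'' is correctly set up. But as written the proposal has a genuine gap exactly where you yourself locate ``the technical heart'': the local confluence claim is never actually established. In the non-adjacent case you still owe the verification that $R_j$ raises $R_iB$ after $R_i$ has acted (and the degenerate situation $R_iB=R_jB$, which is what the first admissibility axiom governs, has to be split off and handled separately); in the adjacent case you only say you ``would track the two length-increasing paths and show they reconverge,'' and the case analysis over the rank-$\le 3$ root subsystems, where the closure axiom $2\gamma+\gamma_1+\gamma_2+\gamma_3\in B$ must be invoked to exclude the mixed raise/lower configurations, is precisely the mathematical content of the theorem. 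Without carrying it out, the argument establishes nothing beyond the existence of maximal elements, which is trivial for a finite poset.

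There is a second, smaller gap. Newman's lemma requires the raising relation to be terminating, and on a finite set this is equivalent to the relation being acyclic. You assert that $<$ is a well-founded strict order as a consequence of $W$ being finite, but finiteness alone does not rule out cycles of raisings: since a single $R_i$ may move several roots of $B$ and only the minimal-height moved root is constrained to go up, no naive height functional is obviously monotone along the relation --- a difficulty you acknowledge yourself in the last paragraph but do not resolve. So the acyclicity of the covering relation (equivalently, that the ``minimal ordering'' really is a partial order) is itself something to be proved before Newman's lemma applies. Both of these points are settled in \cite{CGW2006}, which is why the present paper simply cites that reference rather than reproving the statement.
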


 For any $\beta\in\Phi^+$ and $i\in\{1,\ldots,n\}$, there exists a $w\in
W$ such that $\beta = w\alpha_i$. Then $R_\beta := wR_iw^{-1}$ and
$E_\beta := wE_iw^{-1}$ are well defined (this is well known from Coxeter
group theory for $R_\beta$;
see \cite[Lemma 4.2]{CFW2008} for $E_\beta$).  If
$\beta,\gamma\in\Phi^+$ are mutually orthogonal, then $E_\beta$ and
$E_\gamma$ commute (see \cite[Lemma 4.3]{CFW2008}). Hence, for $B\in\cA$, we
 define the product
\begin{eqnarray}
\label{eqn:EprodB}
E_B &=& \prod_{\beta\in B} E_\beta,
\end{eqnarray}
which is a quasi-idempotent, and the normalized version
\begin{eqnarray}
\label{eqn:EhatB}
\hE_B &=& \delta^{-|B|} E_B,
\end{eqnarray}
which is an idempotent element of the Brauer monoid.
For a mutually orthogonal root subset $X\subset \Phi^+$, we have
\begin{eqnarray}
\label{eqn:Ecloure}
E_{X^{\rm cl}}=\delta^{|X^{\rm cl}\setminus X|}E_{X}.
\end{eqnarray}
%
Let $C_X=\{i\in Q\mid \alpha_i \perp X\}$ and
let $W(C_{X})$ be the subgroup generated by the
generators of nodes in $C_X$.
The subgroup $W(C_{X})$ is called the \emph{centralizer} of $X$.
The normalizer of $X$, denoted by $N_{X}$ can be defined as
$$N_{X} =\{w\in W\mid E_X w=w E_X\}.$$
We let $D_X$ denote a set of right coset representatives for $N_X$ in $W$.\\
In \cite[Definition 3.2]{CFW2008}, an action of the Brauer monoid
$\BrM(Q)$ on the collection $\cA$ of admissible root sets
in $\Phi^+$ was indicated below, where  $Q\in {\rm ADE}$.
\begin{defn}\label{eq:aboveaction}
There is an action of the Brauer monoid $\BrM(Q)$ on the collection
$\cA$.  The generators $R_i$ $(i=1,\ldots,n)$ act by the natural action of
Coxeter group elements on its  positive root sets as in Remark \ref{rem:positiveaction},
and the element $\delta$ acts as the identity,
and the action of $E_i$ $(i=1,\ldots,n)$ is defined by
\begin{equation}
E_i B :=\begin{cases}
B & \text{if}\ \alpha_i\in B, \\
(B\cup \{\alpha_{i}\})^{\rm cl} & \text{if}\ \alpha_i\perp B,\\
R_\beta R_i B & \text{if}\ \beta\in B\setminus \alpha_{i}^{\perp}.
\end{cases}
\end{equation}
\end{defn}

We will refer to this action as the admissible set action.
This monoid action plays an important role in getting a basis of $\BrM(Q)$ in \cite{CFW2008}. For the basis, we state one  conclusion from \cite[Proposition 4.9]{CFW2008} below.
 \begin{prop}\label{ADErewform}
  Each element of the Brauer monoid $\BrM(Q)$ can be written in the form $$\delta^k uE_{X}zv,$$ where
 $X$ is the highest element from one $W$-orbit in $\cA$, $u$, $v^{-1}\in D_X$, $z\in W(C_X)$, and $k\in \Z$.
 \end{prop}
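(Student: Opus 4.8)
The plan is to induct on the number $r$ of Temperley--Lieb generators occurring in a word for the given element. Since the parameter $\delta$ is central, first move all factors $\delta^{\pm1}$ to the front; then, using $wE_iw^{-1}=E_{w\alpha_i}$ for $w\in W$, absorb the Coxeter generators into Weyl-group elements so that any element of $\BrM(Q)$ becomes $\delta^k$ times a word $w_0E_{j_1}w_1\cdots E_{j_r}w_r$ with $w_0,\dots,w_r\in W$. The case $r=0$ gives $\delta^k w$ with $w\in W$, which is already the claimed form for the empty orbit: taking $X=\emptyset$ one has $E_\emptyset=1$, $D_\emptyset=\{1\}$ and $W(C_\emptyset)=W$, so $\delta^k w=\delta^k\cdot 1\cdot E_\emptyset\cdot w\cdot 1$.

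For the inductive step I assume the truncation $\delta^k w_0E_{j_1}\cdots E_{j_{r-1}}w_{r-1}$ has been put in normal form $\delta^k uE_Xzv$ with $X$ the top of its orbit, and I append $E_{j_r}w_r$. Conjugating $E_{j_r}$ leftward past $zv$ rewrites the element as $\delta^k u\,E_XE_\gamma\,w$, where $\gamma=zv\,\alpha_{j_r}$ is a root and $w=zvw_r\in W$. The whole step then reduces to a single lemma: for admissible $X$ and a root $\gamma$, the one-step product $E_XE_\gamma$ equals $\delta^{e}\,yE_{X'}$ for some $y\in W$, some $e\in\Z$, and an admissible set $X'$ equal to the image of $X$ under the monoid action of $E_\gamma$ from Definition \ref{eq:aboveaction}. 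Its three cases match the three branches of that action: if $\gamma\in X$ then $E_XE_\gamma=\delta E_X$ by \eqref{1.1.4}; if $\gamma\perp X$ then $E_XE_\gamma=E_{X\cup\{\gamma\}}=\delta^{-m}E_{(X\cup\{\gamma\})^{\rm cl}}$ with $m=|(X\cup\{\gamma\})^{\rm cl}\setminus(X\cup\{\gamma\})|$ by the closure identity \eqref{eqn:Ecloure}; and if some $\beta\in X$ is non-orthogonal to $\gamma$, then factoring $E_X=E_{X\setminus\{\beta\}}E_\beta$ and applying the $W$-conjugate of relation \eqref{1.1.9} in the form $E_\beta E_\gamma=R_\gamma R_\beta E_\gamma$ both lowers the number of $E$-factors and extrudes a Coxeter element, so a secondary induction on the number of factors finishes the lemma.

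Granting the lemma, the element is now $\delta^{k+e}(uy)E_{X'}w$ with $X'$ admissible. By Theorem \ref{thm:maximal} there is $g\in W$ taking $X'$ to the unique maximal element $X_0$ of the orbit $WX'$; since $E_{X_0}=gE_{X'}g^{-1}$, substitution yields $\delta^{k+e}pE_{X_0}q$ with $p=uyg^{-1}$ and $q=gw$ in $W$. It remains to normalise the two Coxeter factors against $E_{X_0}$. Using that $N_{X_0}=\{w\in W\mid E_{X_0}w=wE_{X_0}\}$ commutes with $E_{X_0}$, I absorb the $N_{X_0}$-parts of $p$ and $q$ across $E_{X_0}$ and select coset representatives from $D_{X_0}$ on either side, bringing the element to $u''E_{X_0}(\,\cdot\,)v''$ with $u'',v''^{-1}\in D_{X_0}$. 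Finally the reflections that merely permute $X_0$ are swallowed by $E_{X_0}$ via \eqref{1.1.3} (up to powers of $\delta$), so that the surviving middle factor lies in the centralizer $W(C_{X_0})$; this is the asserted form $\delta^{k'}u''E_{X_0}zv''$.

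The main obstacle is the reduction lemma of the second paragraph. One must check that the defining relations \eqref{1.1.1}--\eqref{1.1.10} really do collapse every one-step product $E_XE_\gamma$ to a single $\delta^{e}yE_{X'}$ whose set $X'$ is dictated by the combinatorial action, and, above all, that the exponent $e$ is produced consistently by the closure formula \eqref{eqn:Ecloure} rather than depending on the reduction path; the monoid action being a genuine action is what guarantees this path-independence. The remaining delicate point is the final repackaging: one needs the structure of $N_{X_0}$ to know that, after splitting off representatives in $D_{X_0}$, the leftover normalizer contributes only reflections absorbed by $E_{X_0}$ together with a factor in $W(C_{X_0})$. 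Both rely on $X_0$ being maximal in its orbit, which is exactly what makes the reduction terminate and the normalizer split cleanly.
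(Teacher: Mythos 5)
You should first note that the paper contains no proof of this proposition: it is quoted verbatim from \cite[Proposition 4.9]{CFW2008}, so there is no in-paper argument to compare against. Your outline is essentially the strategy of that reference: induct on the number of Temperley--Lieb generators, collapse a one-step product $E_XE_\gamma$ according to the three branches of the admissible-set action (using \eqref{1.1.4}, \eqref{eqn:Ecloure}, and the conjugate of \eqref{1.1.9} with a secondary induction on $|X|$), move to the maximal element of the orbit via Theorem \ref{thm:maximal}, and then normalise the flanking Coxeter factors against $N_X$. That skeleton is sound.

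Two steps, however, carry the real weight and are asserted rather than established. The lesser one is the path-independence of your reduction lemma: you appeal to ``the monoid action being a genuine action,'' but that the assignment in Definition \ref{eq:aboveaction} is well defined is itself a theorem of \cite{CGW2006} and \cite{CFW2008} and must be cited as input, not treated as evident. The serious one is the final repackaging. You claim the reflections that ``merely permute $X_0$'' are swallowed by $E_{X_0}$ via \eqref{1.1.3}. That relation only yields $R_\beta E_\beta=E_\beta$ for $\beta\in X_0$, i.e.\ it absorbs the reflections in the individual roots of $X_0$; an element of $N_{X_0}$ that permutes the roots of $X_0$ nontrivially is not a product of such reflections, and absorbing it needs \eqref{1.1.9}, \eqref{1.1.10} together with a genuine structural description of $N_{X_0}$ --- namely that modulo $W(C_{X_0})$ and the part killed into $E_{X_0}$ nothing survives. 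In \cite{CFW2008} this is a separate, nontrivial lemma proved using the explicit orbits of Table \ref{table:admADE}; it does not follow from maximality of $X_0$ alone. Without it, you only get a crude form $\delta^k u E_{X_0} w v$ with $w\in N_{X_0}$, not the asserted $z\in W(C_{X_0})$, which is precisely the content of the proposition.
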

\section{Morita equivalence on $\Br(Q,k)$ with generic parameter $\delta$}\label{sect:ME}
Here we suppose that $k$ is a field.
To satisfy the cellular condition of corresponding Hecke algebra  in \cite{G2007} and
good prime property in \cite{L1984}, and also \cite[Table 3]{CFW2008}, we suppose the
following for the characteristic of $k$($\Char(k)$).
\begin{equation}\label{chark}
\begin{cases}
\Char(k) \quad\text{no condition},\quad \text{when}\quad  Q=\ddA_n,\\
\Char(k)\neq 2,\quad \text{when}\quad  Q=\ddD_n,\\
\Char(k)\neq 2,3, \quad \text{when}\quad  Q=\ddE_6,\ddE_7,\\
\Char(k)\neq 2,3,5 \quad \text{when}\quad  Q=\ddE_8.
\end{cases}
\end{equation}
Recall the representation $\rho_{\fB}$ of $\BrM(Q)$ from \cite[Lemma 3.4]{CFW2008}.
Let $V_{\fB}$ be the free right $k[\delta^{\pm 1}][W(C_{\fB})]$ with basis
$\xi_{B}$ for $B\in \fB$, where $W(C_{\fB})=W(C_{X})$ with $X$ being the highest element of $\fB$.
we define $R_iV_{B}=V_{R_i B}h_{B,i},$
where $h_{B,i}$ is defined in \cite[Definition 2]{CGW2006}.
 The action of $E_i$ $(i=1,\ldots,n)$  on $V_{\fB}$ is defined by
\begin{equation}
E_i \xi_B :=\begin{cases}
\xi_B & \text{if}\ \alpha_i\in B, \\
0 & \text{if}\ \alpha_i\perp B,\\
R_\beta R_i \xi_B & \text{if}\ \beta\in B\setminus \alpha_{i}^{\perp}.
\end{cases}
\end{equation}
\begin{thm}\label{thm:semisimple}
Let $Q\in\ADE$.
\begin{enumerate}[(i)]
\item The  associative algebra $\Br(Q,k)$ is free over $k[\delta^{\pm 1}]$  and of rank as given in the \cite[Table 2]{CFW2008},
and the algebra is semisimple when tensored with $k(\delta)$.

\item
For each irreducible representation $\tau$ of $W(C_{\fB})$, we denote ${\rm dim}(\tau)$ is the dimension of the representation $\tau$.
The algebra $\Br(Q,k)\otimes_{\Z[\delta,\delta^{-1}]}k(\delta)$ is a direct sum of matrix algebras of size $|\fB|\cdot{\rm dim}(\tau)$ for
$(B,\tau)$ running over all pairs of $W$-orbits $\fB$ in $\cA$ and any irreducible representation $\tau$ of $W(C_{\fB})$.
\end{enumerate}
\end{thm}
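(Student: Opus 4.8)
The plan is to read off the freeness and rank in (i) directly from the monomial basis of the Brauer monoid, and then to obtain both the semisimplicity in (i) and the matrix decomposition in (ii) from the iterated-inflation (cellular) structure of $\Br(Q,k)$ combined with Theorem~\ref{thm:si}. By Proposition~\ref{ADErewform} every element of $\BrM(Q)$ has the form $\delta^{k}uE_{X}zv$, where $X$ is the highest element of a $W$-orbit $\fB\subseteq\cA$, $u,v^{-1}\in D_X$, $z\in W(C_X)$ and $k\in\Z$; ranging over all orbits this yields a $k[\delta^{\pm1}]$-spanning set, and that it is in fact a basis of the cardinality recorded in \cite[Table 2]{CFW2008} is the basis theorem of \cite{CFW2008}, which I would cite. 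In particular, after inverting $\delta$,
$$\dim_{k(\delta)}\Br(Q,k(\delta))=\sum_{\fB}|\fB|^{2}\,|W(C_{\fB})|,$$
using that the orbit map $u\mapsto uX$ identifies $D_X$ with $\fB$, so that $|D_X|=|\fB|$ (here $W(C_X)$ fixes $X$ pointwise and $N_X$ is the set-stabiliser of $X$).

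Second, I would exhibit $\Br(Q,k(\delta))$ as an iterated inflation in the sense recalled in Section~\ref{sect:CA}. Over $k(\delta)$ the units $\delta^{k}$ disappear and the monomial basis splits into layers indexed by the orbits $\fB$, ordered by a dominance order on orbits for which multiplication within a layer produces only lower-layer terms; within each orbit there is a unique maximal element $X$ by Theorem~\ref{thm:maximal}, and this is the $X$ appearing in Proposition~\ref{ADErewform}. For the layer $\fB$ I take $V_{\fB}=\bigoplus_{B\in\fB}k(\delta)\,\xi_{B}$ and bottom algebra $B_{\fB}=k(\delta)[W(C_{\fB})]$, with the basis element $uE_{X}zv$ corresponding to $\xi_{uX}\otimes\xi_{v^{-1}X}\otimes z$. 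The anti-involution is the one on $\BrM(Q)$ fixing each $R_i$ and $E_i$ and reversing words; it satisfies the compatibility $i(\varphi_{\fB}(\xi,\eta))=\varphi_{\fB}(\eta,\xi)$ required of an inflation. The bilinear form $\varphi_{\fB}\colon V_{\fB}\otimes V_{\fB}\to B_{\fB}$ is read off from the rule $E_{X}\,a\,E_{X}\equiv\varphi_{\fB}(\cdot,\cdot)\,E_{X}$ modulo the span of the strictly lower layers, exactly as in the inflation multiplication recalled before Theorem~\ref{thm:si}.

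The crux, and the step I expect to be the main obstacle, is to show that each form $\varphi_{\fB}$ is non-singular over $k(\delta)$. Its matrix entries are Laurent polynomials in $\delta$, and I would prove non-vanishing of the Gram determinant for generic $\delta$ by a leading-term argument: the diagonal entries arise from the quasi-idempotency $E_{B}^{2}=\delta^{|B|}E_{B}$ and from equation~(\ref{eqn:Ecloure}), contributing top-degree invertible powers of $\delta$, while the off-diagonal entries have strictly smaller $\delta$-degree, so the determinant is a nonzero Laurent polynomial and hence invertible in $k(\delta)$. The characteristic hypotheses~(\ref{chark}) are precisely the good-prime conditions guaranteeing, by Maschke's theorem, that each $B_{\fB}=k(\delta)[W(C_{\fB})]$ is semisimple. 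Granting non-singularity, Theorem~\ref{thm:si} yields
$$\Br(Q,k(\delta))\overset{Morita}{\sim}\bigoplus_{\fB}k(\delta)[W(C_{\fB})],$$
a semisimple algebra, whence $\Br(Q,k(\delta))$ is semisimple; this proves (i).

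Finally, (ii) follows formally. Morita equivalence preserves isomorphism classes of simple modules, and the simples of $\bigoplus_{\fB}k(\delta)[W(C_{\fB})]$ are indexed by pairs $(\fB,\tau)$ with $\tau$ an irreducible representation of $W(C_{\fB})$; the corresponding simple $\Br(Q,k(\delta))$-module is the cell module $V_{\fB}\otimes_{k(\delta)[W(C_{\fB})]}\tau$, of dimension $|\fB|\cdot\dim(\tau)$ because $V_{\fB}$ is free of rank $|\fB|$ as a right $B_{\fB}$-module. Since the irreducible representations of Weyl groups are realisable over the prime field, $k(\delta)$ is a splitting field, so $\Br(Q,k(\delta))$ is split semisimple and Wedderburn's theorem gives $\Br(Q,k(\delta))\cong\bigoplus_{(\fB,\tau)}M_{|\fB|\dim(\tau)}(k(\delta))$, which is exactly (ii). As a consistency check, $\sum_{\fB,\tau}(|\fB|\dim\tau)^{2}=\sum_{\fB}|\fB|^{2}|W(C_{\fB})|$ agrees with the rank computed in the first step.
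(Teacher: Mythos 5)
Your proposal is correct in substance but runs the logic in the opposite direction from the paper, and it is worth being clear about what that changes. The paper's proof of this theorem does not touch the cellular/inflation structure at all: it cites \cite[Theorem 1.1]{CFW2008} for freeness and rank, observes that the characteristic hypotheses (\ref{chark}) make each $k(\delta)[W(C_{\fB})]$ split semisimple, and then invokes the representation-theoretic argument of \cite{CFW2008} (the modules $V_{\fB}\otimes\tau$ built from $\rho_{\fB}$, irreducibility, and the count $\sum_{\fB,\tau}(|\fB|\dim\tau)^2=\mathrm{rank}$) to get semisimplicity and the Wedderburn decomposition of (ii) as in \cite[Corollary 5.6]{CFW2008}. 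Only afterwards, in Theorem \ref{thm:morita}, does the paper deduce non-singularity of the forms $\varphi_{\fB}$ \emph{from} semisimplicity via \cite[Theorem 3.8]{GL1996}. You instead prove non-singularity of $\varphi_{\fB}$ directly by a leading-term Gram-determinant argument and then obtain semisimplicity through Theorem \ref{thm:si}; in effect you prove Theorem \ref{thm:morita} first and derive the present theorem as a corollary. Both routes are legitimate; yours buys a self-contained semisimplicity proof that does not lean on the irreducibility analysis of \cite{CFW2008}, at the price of having to carry out the degree estimate yourself, whereas the paper gets non-singularity for free once semisimplicity is imported.

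Two caveats. First, the step you yourself flag as the obstacle is genuinely the only non-formal point: you must check that for $B\neq B'$ in the same orbit the within-layer part of $E_BE_{B'}$ has $\delta$-degree at most $|X|-1$. This does hold (coinciding roots contribute $|B\cap B'|<|X|$ factors of $\delta$, non-orthogonal pairs contribute group elements via relation (\ref{1.1.9}) and no $\delta$, and orthogonal unions fall into a strictly lower layer by (\ref{eqn:Ecloure})), and note also that the entries are elements of $k(\delta)[W(C_{\fB})]$ rather than scalars, so the determinant must be taken after passing to the regular representation; the diagonal then contributes $\delta^{|X|}$ times invertible group elements and the argument goes through. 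Second, your appeal to Maschke's theorem is imprecise: the good-prime conditions (\ref{chark}) do not assert $\Char(k)\nmid|W(C_{\fB})|$ (e.g.\ $\Char(k)=7$ is permitted for $\ddE_8$ while $7$ divides $|W(\ddE_8)|$). The paper makes essentially the same assertion at this point, so this is a shared imprecision rather than a defect of your argument relative to the paper's, but it is not literally Maschke.
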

\begin{proof} The first half of $(i)$ follows from \cite[Theorem 1.1]{CFW2008}. Checking \cite[Table 3]{CFW2008},
the number $\Char(k)$ is a good prime for every  $W(C_{\fB})$ with all $W$-orbits $\fB$ in $\cA$ for type $Q$, which implies that
$k(\delta)[W(C_{\fB})]$ is split semisimple.
Then the similar argument for proving \cite[Theorem 1.1]{CFW2008}
can be applied to the second half of $(i)$. The conclusion of $(ii)$ holds for  the similar arguments in \cite[Corollary 5.6]{CFW2008}
 and semisimplicity of $k(\delta)[W(C_{\fB})]$ of  all $W$-orbits $\fB$ in $\cA$ for type $Q$.
\end{proof}
\begin{thm} \label{thm:morita}
For the algebra $\Br(Q,k)\otimes_{\Z[\delta,\delta^{-1}]}k(\delta)$, we have
$\Br(Q,k)\otimes_{\Z[\delta,\delta^{-1}]}k(\delta)\overset{morita}{\sim}\oplus k(\delta)[W(C_{\fB})],$
where $\fB$ runs over all the $W$-orbits  in $\cA$,  and $W(C_{\emptyset})=W$ when $\fB=\emptyset$.
Furthermore, the  algebra $\Br(Q,k)\otimes_{\Z[\delta,\delta^{-1}]}k(\delta)$ is quasi-hereditary.
\end{thm}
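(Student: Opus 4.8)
The plan is to realize $\Br(Q,k)\otimes_{\Z[\delta,\delta^{-1}]}k(\delta)$ as an iterated inflation whose base algebras are exactly the group algebras $k(\delta)[W(C_{\fB})]$, and then to apply Theorem \ref{thm:si}. By Proposition \ref{ADErewform}, after normalizing away the powers of $\delta$ (units in $k(\delta)$), a basis of the algebra is given by the elements $uE_{X}zv$ with $u,v^{-1}\in D_X$ and $z\in W(C_X)$, as $X$ ranges over the highest elements of the $W$-orbits $\fB$ in $\cA$ (unique by Theorem \ref{thm:maximal}). Grouping these basis elements by the orbit $\fB$ to which $X$ belongs produces, for each $\fB$, a layer identified with $V_{\fB}\otimes_{k(\delta)} V_{\fB}\otimes_{k(\delta)} k(\delta)[W(C_{\fB})]$, where $V_{\fB}$ is the free $k(\delta)$-module with basis indexed by $D_X$, so that $\dim V_{\fB}=|\fB|$, via $uE_Xzv\mapsto u\otimes v^{-1}\otimes z$.

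First I would fix the order on the layers coming from the monoidal poset on admissible sets (refined by $|B|$), so that multiplying two middle terms and reducing by means of the admissible-set action of Definition \ref{eq:aboveaction} together with \eqref{eqn:Ecloure} produces, modulo the span of the strictly lower layers, an element of the same layer. This is precisely an inflation multiplication $(u\otimes v\otimes z)(u'\otimes v'\otimes z')=u\otimes v'\otimes z\,\varphi_{\fB}(v,u')\,z'+(\text{lower terms})$ for a bilinear form $\varphi_{\fB}\colon V_{\fB}\otimes V_{\fB}\to k(\delta)[W(C_{\fB})]$. The anti-involution $*$ of $\Br(Q,k)$ fixing each $R_i,E_i$ restricts to the required involution on $k(\delta)[W(C_{\fB})]$ and interchanges $\varphi_{\fB}(v,u')$ with $\varphi_{\fB}(u',v)$, so the inflation is compatible with the cellular datum of Definition \ref{defn:KXcelluar}.

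The heart of the argument, and the step I expect to be the main obstacle, is to show that every form $\varphi_{\fB}$ is non-singular. Rather than computing the Gram matrices (whose entries lie in the group algebra and encode the values $E_X w E_X$) explicitly, I would deduce non-singularity from the semisimplicity established in Theorem \ref{thm:semisimple}: if some $\varphi_{\fB}$ were singular, the radical of the corresponding cell module would be non-zero, so $\Br(Q,k)\otimes k(\delta)$ would fail to be semisimple, contradicting Theorem \ref{thm:semisimple}(i) (this is the Graham--Lehrer criterion of \cite{GL1996} in its inflated form). Equivalently, the matrix-algebra decomposition forced by non-singular forms, namely $\bigoplus_{\fB}V_{\fB}\otimes V_{\fB}\otimes k(\delta)[W(C_{\fB})]\cong\bigoplus_{\fB,\tau}M_{|\fB|\cdot\dim\tau}(k(\delta))$, matches term-by-term the decomposition recorded in Theorem \ref{thm:semisimple}(ii), which pins the forms down as non-degenerate. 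With all $\varphi_{\fB}$ non-singular, Theorem \ref{thm:si} yields
$$\Br(Q,k)\otimes_{\Z[\delta,\delta^{-1}]}k(\delta)\overset{morita}{\sim}\bigoplus_{\fB}k(\delta)[W(C_{\fB})],$$
the orbit $\fB=\emptyset$ contributing $W(C_{\emptyset})=W$ and hence the summand $k(\delta)[W]$.

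Finally, for quasi-heredity I would invoke that the non-singularity of all the inflation forms turns the cell chain $0=J_0\subset\cdots\subset J_n=A$ of Definition \ref{defn:KXcelluar} into a heredity chain: each layer $V_{\fB}\otimes V_{\fB}\otimes k(\delta)[W(C_{\fB})]$ is an idempotent ideal in the relevant quotient satisfying the square-zero radical condition, by the König--Xi analysis of \cite{KX1999}. Alternatively, the semisimplicity of $\Br(Q,k)\otimes k(\delta)$ makes it quasi-hereditary at once, but the non-singular forms exhibit the explicit heredity chain attached to the cellular structure.
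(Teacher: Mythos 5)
Your proposal is correct and follows essentially the same route as the paper: the iterated inflation structure is taken from the cellularity proof in \cite{CFW2008}, non-singularity of the forms $\varphi_{\fB}$ is deduced from the semisimplicity in Theorem \ref{thm:semisimple} via \cite[Theorem 3.8]{GL1996}, the Morita equivalence then follows from Theorem \ref{thm:si}, and quasi-heredity from the non-singular forms as in \cite[Remark 3.10]{GL1996}. The extra detail you supply on building the layers from Proposition \ref{ADErewform} is a fuller account of what the paper simply cites, not a different argument.
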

\begin{proof}
From the proof of the cellularity theorem \cite[Theorem 1.2]{CFW2008} in \cite[Section 6]{CFW2008},
we have that  $\Br(Q, k)\otimes_{\Z[\delta,\delta^{-1}]}k(\delta)$ is an iterated inflation
of $k(\delta)[W(C_{\fB})]$, where $\fB$ runs over all the $W$-orbits  in $\cA$, including $\fB=\emptyset$ and $W(C_{\emptyset})=W.$
It follows that
 $$\Br(Q, k)\otimes_{\Z[\delta,\delta^{-1}]}k(\delta)\cong \bigoplus_{\fB} V_{\fB}\otimes V_{\fB} \otimes k(\delta)[W(C_{\fB})].$$
By Theorem \ref{thm:semisimple}, the algebra $\Br(Q, k)\otimes_{\Z[\delta,\delta^{-1}]}k(\delta)$ is semisimple, therefore
the bilinear forms for defining the iterated inflation structure of $\Br(Q, k)\otimes_{\Z[\delta,\delta^{-1}]}k(\delta)$
$$\varphi_{\fB}: V_{\fB}\otimes V_{\fB} \rightarrow k(\delta)[W(C_{\fB})]$$
 are non-singular by \cite[Theorem 3.8]{GL1996}. Hence the theorem holds for Theorem \ref{thm:si}.
 The  bilinear forms is non-singular, then the   algebra $\Br(Q,k)\otimes_{\Z[\delta,\delta^{-1}]}k(\delta)$ is quasi-hereditary follows from \cite[Remark 3.10]{GL1996}.
\end{proof}
\begin{rem} If we take  $\{v_{B}\}_{B\in \fB}$ as the basis of $V_{\fB}$, we see that $\varphi_{\fB}$ is a matrix over  $ k(\delta)[W(C_{\fB})]$ with coefficients of
polynomial of variable $\delta$. By Theorem \ref{thm:morita}, when we consider this algebra with evaluating $\delta$ in $k$, there are only finite
values of $\delta$ so that  $\varphi_{\fB}$ fails to be non-singular.  So there are only finite $\delta$s in  $k$ so that $\Br(Q, k)\otimes_{\Z[\delta,\delta^{-1}]}k(\delta)$
fails to be semisimple.
\end{rem}
\begin{rem} Theorem \ref{thm:morita} is a generalization of \cite[Theorem 7.3]{KX2001}, which says that the
classical Brauer algebra
$$\mathscr{B}_{n}(\delta)\overset{Morita}{\sim}\bigoplus_{i=1}^{\left\lfloor n/2\right\rfloor} k\mathfrak{S}_{n-2i},$$
where the classical Brauer algebra is considered as our Brauer algebra of type $\ddA_{n-1}$.
\end{rem}
\begin{rem} Let $A=\Br(Q,k)\otimes_{\Z[\delta,\delta^{-1}]}k(\delta)$. Since $A$  is quasi-hereditary, we have the following.
\begin{enumerate}[(i)]
\item The algebra $A$ has finite global dimension bounded by $2\sum_{\fB}W(C_{\fB})-2$,
where $\fB$ runs over all the $W$-orbits  in $\cA$, including $\fB=\emptyset$ and $W(C_{\emptyset})=W$(\cite{DR1989}).
\item The Cartan matrix of $A$ has determinant $1$(\cite[Theorem 3.1]{KX19992}).
\item Any cell chain of $A$ is a hereditary Chain (\cite[Theorem 3.1]{KX19992}).
\end{enumerate}
\end{rem}
\section{Semisimplicity of  $\Br(\ddD_n,k)$ with $\delta$ specialized}\label{sect:typeD}
Now we focus on type $\ddD_n$, and the parameter $\delta$ is  specialized in $k$.

Let $\Lambda$ be the cells used to prove the celluarity in \cite[Section 6]{CFW2008}.
The underlying set $\Lambda$ is defined as the union of $\Lambda_1$ and
$\Lambda_2$, where $\Lambda_1=\{t\}_{t=0}^{[\frac{n}{2}]}$
and $\Lambda_2=\{(t,\theta)\}_{t=1}^{[\frac{n+1}{2}]}$.
The partial
order on $\Lambda$ is given by
\begin{itemize}
\item $\lambda_1 >\lambda_2$ if
$\lambda_1=t_1<t_2=\lambda_2\in \Lambda_1$,
\item  $\lambda_1 >\lambda_2$ if
$\lambda_1=(t_1,\theta)$, $\lambda_2=(t_2,\theta)\in \Lambda_2$ and
$t_1<t_2$,
\item  $\lambda_1 >\lambda_2$ if $\lambda_1=t_1\in \Lambda_1$ and
$\lambda_2=(t_2,\theta)\in \Lambda_2$ and $t_1\leq t_2$.
\end{itemize}
\noindent
It can be
illustrated by the following Hasse diagram, where $a>b$ is equivalent to the
existence of a directed path from $a$ to $b$.
\begin{center}
\phantom{longgggg}
\quad
\xymatrix{
0\ar[dr]\ar[r]&1\ar[r]\ar[d] & 2 \ar[d]\ar[r] &3\ar[d]\ar[r]&\cdots\ar[d]\\
         & (1,\theta) \ar[r] & (2,\theta)\ar[r]&(3,\theta)\ar[r]&\cdots
          }
\end{center}
And we see that the order on $\Lambda$ is partially ordered,  but not totally ordered.

As we have seen in the Table \ref{table:admADE}, There is a class of $Y(t)\cup Y(t)^{*}$,
for $1\leq t\leq \frac{n}{2}$.
The $\Lambda_2$ is corresponding to the orbits of those $Y(t)\cup Y(t)^{*}$.
By the \cite[Section 1]{CFW2008} or by the diagram representation of Brauer algebra of type
$\ddD_n$ in \cite{CGW2009}, we see that that the structure  and the bilinear forms associated to these cells
are the same as $\Br(\ddA_{n-1},k)$, except the  $\delta$ in $\Br(\ddA_{n-1},k)$ is replaced by
$\delta^2$, because we replaced each generator of $E_i$ in $\Br(\ddA_{n-1},k)$  by  $E_iE_i^*$ in $\Br(\ddD_n,k)$.

We keep the notation from \cite{R2005} and \cite{RS2006}, Let
$$\Z(n)=\{i\in \Z \mid 4-2n\leq i\leq n-2\}\setminus \{i\in \Z \mid 4-2n\leq i\leq 3-n, 2\nmid i\}.$$
By \cite[Theorem 1.2, Theorem 1.3]{R2005} and our analysis about the structure of $\Br(\ddD_n,k)$, we have the followings.
\begin{thm}
Let $k=\C$. The algebra  $\Br(\ddD_n,k)$ is not semisimple, when
\begin{enumerate}[(i)]
\item the parameter $\delta^2\in \Z(n)$ and $\delta\neq 0$, or
\item the parameter $\delta=0$ and $n\notin \{1,3,5\}$.
\end{enumerate}
 \end{thm}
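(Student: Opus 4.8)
The plan is to deduce non-semisimplicity from the failure of a single cell bilinear form to be non-degenerate. Recall that $\Br(\ddD_n,k)$ is cellular with poset $\Lambda=\Lambda_1\cup\Lambda_2$, and that by \cite[Theorem 3.8]{GL1996} a cellular algebra over a field is semisimple if and only if the bilinear form $\phi_\lambda$ on every cell module is non-degenerate. Hence it suffices, in each of the two cases, to exhibit one $\lambda\in\Lambda$ whose Gram form $\phi_\lambda$ is singular. I will locate such a $\lambda$ inside the layer $\Lambda_2$, where the combinatorics reduces to that of the classical Brauer algebra.

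The key input is the structural comparison recorded just above the statement: the cells indexed by $\Lambda_2$ are the orbits of the sets $Y(t)\cup Y(t)^*$, and --- because each Temperley--Lieb generator $E_i$ of $\Br(\ddA_{n-1},k)$ is replaced by the commuting product $E_iE_i^*$ in $\Br(\ddD_n,k)$, with $(E_iE_i^*)^2=\delta^2 E_iE_i^*$ --- the cellular data and the associated bilinear forms on this layer coincide with those of the classical Brauer algebra $\Br(\ddA_{n-1},k)\cong\mathscr{B}_n(\delta)$ after the substitution $\delta\mapsto\delta^2$. Consequently every Gram determinant attached to a cell in $\Lambda_2$ is the corresponding Gram determinant of $\mathscr{B}_n$ evaluated at $\delta^2$. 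In particular the $\Lambda_2$-layer of $\Br(\ddD_n,k)$ is degenerate exactly when $\mathscr{B}_n(\delta^2)$ has a degenerate cell form, i.e. exactly when $\mathscr{B}_n(\delta^2)$ fails to be semisimple.

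For case (i), assume $\delta\neq 0$ and $\delta^2\in\Z(n)$. By \cite[Theorem 1.2]{R2005} the classical Brauer algebra $\mathscr{B}_n(\delta^2)$ over $\C$ is not semisimple, so one of its cell forms is singular; transporting this through the identification above produces a cell $\lambda\in\Lambda_2$ with $\phi_\lambda$ singular, and \cite[Theorem 3.8]{GL1996} gives that $\Br(\ddD_n,k)$ is not semisimple. For case (ii), put $\delta=0$ and $n\notin\{1,3,5\}$; by \cite[Theorem 1.3]{R2005} the algebra $\mathscr{B}_n(0)$ is not semisimple, and the same transfer yields a singular $\phi_\lambda$ on $\Lambda_2$, whence $\Br(\ddD_n,0)$ is not semisimple. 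The hypothesis $\delta\neq 0$ in (i) is exactly what prevents an illegitimate overlap with (ii): although $0\in\Z(n)$ for $n\geq 2$, the value $\delta=0$ is governed by the separate criterion of Rui rather than by substituting into the $\delta\neq 0$ determinant formula, and indeed $\mathscr{B}_n(0)$ stays semisimple for $n\in\{1,3,5\}$.

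The step I expect to be the main obstacle is making the $\delta=0$ comparison rigorous. For $\delta\neq 0$ the normalized idempotents $\hE_B=\delta^{-|B|}E_B$ are available and the $\delta\mapsto\delta^2$ dictionary is clean, but at $\delta=0$ these normalizations break down and $\Br(\ddD_n,k)$ must be treated integrally over $\Z[\delta]$ (rather than over $\Z[\delta^{\pm 1}]$) before specializing. The work is to verify that the cellular basis and the Gram matrices of the $\Lambda_2$ layer specialize compatibly at $\delta=0$ to those of $\mathscr{B}_n(0)$, so that the singularity predicted by \cite[Theorem 1.3]{R2005} genuinely survives in $\Br(\ddD_n,0)$ and is not cancelled by lower-layer contributions; one must also confirm that the exceptional set $\{1,3,5\}$ is precisely where $\mathscr{B}_n(0)$ remains semisimple.
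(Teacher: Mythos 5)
Your argument is exactly the paper's: the theorem is obtained there by the same transfer of Rui's two criteria (\cite[Theorems 1.2, 1.3]{R2005}) through the identification of the $\Lambda_2$ layer of $\Br(\ddD_n,k)$ with the classical Brauer algebra $\mathscr{B}_n$ at parameter $\delta^2$, combined with the Graham--Lehrer criterion that a cellular algebra is semisimple iff every cell form is non-degenerate. The paper actually gives less detail than you do --- it only cites Rui's theorems together with ``our analysis about the structure of $\Br(\ddD_n,k)$'' --- so the $\delta=0$ normalization issue you flag at the end is left implicit there as well.
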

\begin{thm}
Let $k$ be  a field with  characteristic $e>0$. The algebra  $\Br(\ddD_n,k)$ is not semisimple, when
\begin{enumerate}[(i)]
\item the parameter $\delta^2\in \Z(n)$, $\delta\neq 0$ and $e\nmid n!$, or
\item the parameter $\delta=0$ and $n\notin \{1,3,5\}$  and $e\nmid n!$.
\end{enumerate}
 \end{thm}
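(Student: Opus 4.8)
The plan is to reduce the non-semisimplicity of $\Br(\ddD_n,k)$ to that of a classical Brauer algebra of type $\ddA_{n-1}$, exploiting the cellular structure recalled above, and then to quote Rui's modular criterion. First I would use the cellular decomposition with poset $\Lambda=\Lambda_1\cup\Lambda_2$. As noted just before the statement, the layer indexed by $\Lambda_2$ (the orbits of the sets $Y(t)\cup Y(t)^*$) carries the same cell modules and bilinear forms as the classical Brauer algebra $\mathscr{B}_n$ of type $\ddA_{n-1}$, with the sole difference that the loop parameter $\delta$ is replaced by $\delta^2$, since each generator $E_i$ of $\Br(\ddA_{n-1},k)$ is replaced by the commuting product $E_iE_i^*$ in $\Br(\ddD_n,k)$. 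Thus, up to the substitution $\delta\mapsto\delta^2$, the $\Lambda_2$-form of $\Br(\ddD_n,k)$ coincides with a Gram matrix of a cell module of $\mathscr{B}_n(\delta^2)$.

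Next I would invoke the Graham--Lehrer criterion: by \cite[Theorem 3.8]{GL1996} a cellular algebra over a field is semisimple if and only if every cell bilinear form is non-degenerate (equivalently, by Theorem \ref{thm:si}, all the inflation forms $\varphi_\fB$ are non-singular and the base algebras are semisimple). Hence to prove non-semisimplicity it is enough to produce a single degenerate cell form. The hypothesis $e\nmid n!$ is used here to guarantee, via Maschke's theorem, that $k\mathfrak{S}_m$ is semisimple for every $m\le n$; this removes the base algebras as a possible obstruction to detecting degeneracy and ensures that Rui's positive-characteristic criterion \cite[Theorem 1.3]{R2005} specializes to the condition encoded by the set $\Z(n)$.

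For case (i) I would argue that, since $\delta\ne0$ and $\delta^2\in\Z(n)$, Rui's criterion forces the relevant Gram matrix of $\mathscr{B}_n(\delta^2)$ to be singular; by the structural identification above the corresponding $\Lambda_2$-form of $\Br(\ddD_n,k)$ is then singular, so the algebra is not semisimple. For case (ii), when $\delta=0$ the same layer reproduces $\mathscr{B}_n(0)$, which by \cite[Theorem 1.2, Theorem 1.3]{R2005} is non-semisimple exactly when $n\notin\{1,3,5\}$, giving the claimed failure of semisimplicity.

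The hardest part is the structural identification in the first paragraph: I must verify that the replacement $\delta\mapsto\delta^2$ transports not merely the cell modules but the full bilinear forms, so that singularity of the type $\ddA_{n-1}$ form at the value $\delta^2$ is faithfully inherited by $\Br(\ddD_n,k)$. This is where the concrete diagram description of $\Br(\ddD_n,k)$ from \cite{CGW2009} (or the layer analysis of \cite[Section 6]{CFW2008}) must be used carefully. Once this is in place, the remaining steps are a direct application of \cite[Theorem 1.2, Theorem 1.3]{R2005} combined with the semisimplicity criterion \cite[Theorem 3.8]{GL1996}.
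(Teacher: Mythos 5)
Your proposal follows essentially the same route as the paper: identify the $\Lambda_2$ layers of $\Br(\ddD_n,k)$ with the cell layers (and Gram forms) of the classical Brauer algebra at parameter $\delta^2$ via the substitution $E_i\mapsto E_iE_i^*$, and then import Rui's semisimplicity criterion from \cite{R2005} together with the Graham--Lehrer non-degeneracy criterion. The paper leaves this argument implicit in the paragraph preceding the theorem, so your write-up is a correct and somewhat more explicit version of the intended proof.
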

 \section{The Morita equivalence on BMW algebras of simply laced types}\label{sect:BMW}
 The Birman-Murakami-Wenzl (BMW in short) algebras are first introduced in \cite{BW1989} and \cite{M1987},
 which can be considered  as type  $\ddA$ in \cite{CGW2005},
 where the authors extended them to all simply laced types. We present the definition in the below.
  \begin{defn}\label{BMWdefn}
 Let $Q$ be a simply laced Coxeter diagram of rank $n$. The Birman-Murakami-Wenzl algebra of type
$Q$ is the algebra, denoted by $\ddB(Q)$, with ground field $\Q(l,\delta)$, where $l$ and $\delta$ are
transcendental and algebraically independent over $\Q$,  whose presentation is given
on generators $g_i$ and $e_i$ ($i=1$,$2$,$\ldots$, $n$) by the following relations
\begin{eqnarray}
g_ig_j&=&g_jg_i  \qquad \quad  \qquad \quad \quad\mbox{for}\ i\nsim j \\
g_ig_jg_i&=&g_jg_ig_j  \qquad \quad \qquad\,\kern-.04em\quad \mbox{for}\ {i\sim j}\   \\
me_i&=&l(g_i^2+mg_i-1) \qquad \kern.05em \mbox{for}\,\mbox{any}\ i  \\
g_ie_i&=&l^{-1}e_i \qquad \quad  \qquad \kern.05em\quad\quad \mbox{for}\,\mbox{any}\ i \\
e_ig_je_i&=&le_i  \qquad\quad \quad \qquad\,\kern-.04em \quad\quad\mbox{for}\ {i\sim j}\
\end{eqnarray}
where $m=(l-l^{-1})/(1-\delta).$
\end{defn}
\begin{rem}
It is known there is a natural homomorphism of rings from the BMW algebra of type $Q$ to the Brauer algebra of the same type induced on the generators
by $g_i\mapsto R_i$ and $e_i\mapsto E_i$  with $l=1$. In \cite{CGW2005}, \cite{CGW2008}, \cite{CW2011}, it is proved that
the rewritten form in Proposition \ref{ADErewform} gives a basis of the BMW algebra by changing $R_i$ to $g_i$ and $E_i$ to  $e_i$.
\end{rem}
Similar to the Theorem \ref{thm:semisimple}, we have the following conclusion for the algebra $\ddB(Q)$.
\begin{thm}
Let $Q\in\ADE$.
\begin{enumerate}[(i)]
\item The  associative algebra $\ddB(Q)$ is semisimple.

\item
For the algebra $\ddB(Q)$, we have
$\ddB(Q)\overset{morita}{\sim}\oplus \mathcal{H}(C_{\fB}),$
where $\fB$ runs over all the $W$-orbits  in $\cA$, $\mathcal{H}(C_{\fB})$ is the Hecke algebra of type $C_{\fB}$.
Furthermore, the  algebra  $\ddB(Q)$ is quasi-hereditary.
\end{enumerate}
\end{thm}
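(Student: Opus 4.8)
The plan is to run the proofs of Theorems \ref{thm:semisimple} and \ref{thm:morita} in parallel, systematically replacing the Coxeter group algebras $k(\delta)[W(C_{\fB})]$ by the Hecke algebras $\mathcal{H}(C_{\fB})$. The essential input is that, by \cite{CGW2005}, \cite{CGW2008}, \cite{CW2011}, the rewritten form of Proposition \ref{ADErewform} (with $R_i,E_i$ replaced by $g_i,e_i$) furnishes a basis of $\ddB(Q)$. First I would use this basis to realize $\ddB(Q)$ as a cellular algebra obtained as an iterated inflation of the Hecke algebras $\mathcal{H}(C_{\fB})$ along the spaces $V_{\fB}$, exactly as in \cite[Section 6]{CFW2008}. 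This produces a vector space decomposition
$$\ddB(Q) \cong \bigoplus_{\fB} V_{\fB}\otimes V_{\fB}\otimes \mathcal{H}(C_{\fB})$$
together with bilinear forms $\varphi_{\fB}\colon V_{\fB}\otimes V_{\fB}\to \mathcal{H}(C_{\fB})$ governing the layer multiplication.

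The main obstacle, and the one genuinely new point beyond the Brauer case, is to verify that the layer algebra attached to an orbit $\fB$ with highest element $X$ is the Hecke algebra $\mathcal{H}(C_X)$ rather than the group algebra of $W(C_X)$. The key observation is that for $i\in C_X$ one has $\alpha_i\perp X$, so $e_i$ sends $e_X$ into $e_{(X\cup\{\alpha_i\})^{\rm cl}}$, a quasi-idempotent attached to a strictly larger root set and hence lying in a cell strictly below $\fB$ in the cell chain. Thus, working modulo the ideal of lower layers, $e_i$ acts as $0$, and the defining relation $me_i=l(g_i^2+mg_i-1)$ degenerates to the quadratic relation $g_i^2+mg_i-1=0$. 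Together with the braid relations among the $g_i$, this shows that $\{g_i\mid i\in C_X\}$ generate a copy of $\mathcal{H}(C_X)$ on the top layer, which is exactly what is needed for the inflation to have Hecke layer algebras.

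For part (i), semisimplicity then follows from the genericity of the ground field. Because $l$ and $\delta$ are transcendental and algebraically independent over $\Q$, the Hecke parameter of $\mathcal{H}(C_{\fB})$ is a non-constant algebraic function of $l$ and $\delta$, hence transcendental over $\Q$ and in particular not a root of the relevant Poincar\'e polynomials; therefore each $\mathcal{H}(C_{\fB})$ is split semisimple. Feeding this into the argument that proves the semisimplicity half of Theorem \ref{thm:semisimple}(i) — equivalently, exhibiting $\ddB(Q)$ as a direct sum of matrix algebras over the simple components of the $\mathcal{H}(C_{\fB})$ via the analog of \cite[Corollary 5.6]{CFW2008} — yields that $\ddB(Q)$ is semisimple.

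For part (ii), since $\ddB(Q)$ is semisimple, \cite[Theorem 3.8]{GL1996} forces every bilinear form $\varphi_{\fB}$ to be non-singular. Applying Si's Morita equivalence theorem (Theorem \ref{thm:si}) to the iterated inflation then yields $\ddB(Q)\overset{morita}{\sim}\oplus_{\fB}\mathcal{H}(C_{\fB})$, where $\fB$ ranges over the $W$-orbits in $\cA$. Finally, non-singularity of all the $\varphi_{\fB}$ gives that $\ddB(Q)$ is quasi-hereditary by \cite[Remark 3.10]{GL1996}, completing the proof.
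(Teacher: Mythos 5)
Your part (ii) is essentially identical to the paper's argument: the paper also realizes $\ddB(Q)$ as an iterated inflation of the Hecke algebras $\mathcal{H}(C_{\fB})$ along the $V_{\fB}$ (pointing to \cite[Section 8]{CGW2008} for type $\ddD_n$ and the proof of \cite[Theorem 8]{CW2011} for type $\ddE_n$, with type $\ddA_n$ handled via \cite{RS2012}), deduces non-singularity of the forms $\varphi_{\fB}$ from semisimplicity via \cite[Theorem 3.8]{GL1996}, and concludes by Theorem \ref{thm:si} and \cite[Remark 3.10]{GL1996}. Your identification of the layer algebras as Hecke algebras (modulo lower cells $e_i$ acts as $0$, so $me_i=l(g_i^2+mg_i-1)$ collapses to the quadratic relation) is the right heuristic and is what those references establish.

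The genuine gap is in part (i). The paper does not re-derive semisimplicity: it cites \cite[Theorem 1.1]{CGW2008}, \cite[Theorem 1]{CW2011} for types $\ddD_n$, $\ddE_n$ and \cite[Theorem 4.3]{RS2009}, \cite[Theorem 2.17]{RS2012} for type $\ddA_n$. You instead try to deduce semisimplicity of $\ddB(Q)$ from split semisimplicity of the layer algebras $\mathcal{H}(C_{\fB})$ together with the inflation structure, but that implication is false in general: an iterated inflation of semisimple algebras is semisimple only when the bilinear forms $\varphi_{\fB}$ are non-singular, which is precisely the extra hypothesis in Theorem \ref{thm:si} and precisely what you propose to extract \emph{from} semisimplicity in part (ii) --- so as written your (i) and (ii) are circular. (The Brauer and BMW algebras at special parameter values are the standard counterexamples: the layers stay semisimple while the whole algebra does not.) The phrase ``via the analog of \cite[Corollary 5.6]{CFW2008}'' is where the real work hides: one must construct the modules $V_{\fB}\otimes S$ for $S$ ranging over the simple $\mathcal{H}(C_{\fB})$-modules, prove they are irreducible and pairwise non-isomorphic, and match $\sum(|\fB|\dim S)^2$ against the rank of $\ddB(Q)$ supplied by the basis theorem. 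That dimension count is the actual content of the cited semisimplicity theorems, and your proposal does not supply it; either carry it out or cite those results as the paper does.
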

\begin{proof}For (i), when $Q$ is of type $\ddD_{n}$, $4\leq n$, or $\ddE_n$, $n=6$, $7$, $8$, the proofs are given in \cite[Theorem 1.1]{CGW2008},
and \cite[Theorem 1]{CW2011}, respectively. For type $\ddA_n$, by modifying our parameters with the parameters in \cite{RS2009} and \cite{RS2012}, we can see for the generic
parameters, and  the algebra $\ddB(\ddA_n)$ is semisimple because of \cite[Theorem 4.3]{RS2009} and \cite[Theorem 2.17]{RS2012}.\\
For (ii), the prooof of the  case for $Q=\ddA_n$ can be found in \cite[Theorem 2.17]{RS2012}. For $Q$ being other types ,we apply the the argument of the
  proof of Theorem \ref{thm:morita}. By (i), the algebra $\ddB(Q)$ is semisimple, therefore
the bilinear forms for defining the iterated inflation structure of $\ddB(Q)$
$$\varphi_{\fB}^{'}: V_{\fB}\otimes V_{\fB} \rightarrow \mathcal{H}(C_{\fB})$$
which is defined in \cite[Section 8]{CGW2008} for type $\ddD_n$ and in the proof of  \cite[Theorem 8]{CW2011},
 are non-singular by \cite[Theorem 3.8]{GL1996}. Hence the Morita Equivalence in (ii) holds for Theorem \ref{thm:si}.
 The  bilinear forms is non-singular, then the   algebra $\ddB(Q)$ is quasi-hereditary follows from \cite[Remark 3.10]{GL1996}.
\end{proof}
\begin{rem}If we evaluate  the parameters of  $\Br(Q,k)$ and $\ddB(Q)$, we have to compute the Gram determinants of
the bilinear forms defining their cellular structures as in \cite{RS2009}, to judge the semi-simplicity, Morita equivalence and quasi-heredity
about them. Some further work is needed  to complete  parameter problems about this.
There are also Brauer algebras of multiply-laced type which has already defined and studied in \cite{CLY2010},\cite{CL2011}, and \cite{L2015},
and we can explore these properties about them.
\end{rem}

Shoumin Liu\\
Email: s.liu@sdu.edu.cn\\
School of Mathematics, Shandong University\\
Shanda Nanlu 27, Jinan, \\
Shandong Province, China\\
Postcode: 250100

\end{document}